\newtheorem{theo}{Theorem}[section]
\newtheorem{lem}[theo]{Lemma}
\newtheorem{prop}[theo]{Proposition}
\newtheorem{coro}[theo]{Corollary}
\def\mathrmdef#1{\expandafter\def\csname#1\endcsname{{\rm#1}}}
\def\mathsfdef#1{\expandafter\def\csname#1\endcsname{{\rm\sf#1}}}
\def\mathcaldef#1{\expandafter\def\csname#1\endcsname{{\mathcal#1}}}
\def\alg{\mathfrak{alg}}
\def\eeee{\mathfrak{e}}
\def\tttt{\mathfrak{t}}
\def\aaa{\mathfrak{A}}
\def\bbb{\mathfrak{B}}
\def\ccc{\mathfrak{C}}
\def\uuu{\mathcal{U}}
\def\sss{\mathfrak{S}}
\def\DDDD{\mathrm{D}}
\def\AAAA{\mathcal{A}}
\def\WWWW{\mathcal{W}}
\def\VVVV{\mathcal{V}}
\def\YYYY{\mathcal{Y}}
\def\TTTTT{\mathcal{T}}
\begin{document}  
\title{On lifting of biadjoints and lax algebras} 
\author{Fernando Lucatelli Nunes}
\address{CMUC, Department of Mathematics, University of Coimbra, 3001-501 Coimbra, Portugal}
\eaddress{lucatellinunes@student.uc.pt}

\amsclass{18D05, 18C20, 18Dxx, 18C15, 18A40, 18A30}

\thanks{This work was supported by CNPq, National Council for Scientific and Technological Development -- Brazil (245328/2012-2),  and by the Centre for Mathematics of the
University of Coimbra -- UID/MAT/00324/2013, funded by the Portuguese
Government through FCT/MCTES and co-funded by the European Regional Development Fund through the Partnership Agreement PT2020.}

\keywords{lax algebras, pseudomonads, biadjunctions, adjoint triangles, lax descent objects, descent categories, weighted bi(co)limits}
\maketitle

\begin{abstract}
By the biadjoint triangle theorem, given a pseudomonad $\TTTTT $ on a $2$-category $\bbb $, if a right biadjoint $\aaa\to\bbb $ has a lifting to the pseudoalgebras $\aaa\to\mathsf{Ps}\textrm{-}\TTTTT\textrm{-}\Alg $ then this lifting is also right biadjoint provided that $\aaa $ has codescent objects. In this paper, 
we give general results on lifting of biadjoints. As a consequence, we get
 a \textit{biadjoint triangle theorem} which, in particular, allows us to study  triangles involving the $2$-category of lax algebras, proving analogues of the result described above. 
In the context of lax algebras, denoting by $\ell :\mathsf{Lax}\textrm{-}\TTTTT\textrm{-}\Alg \to\mathsf{Lax}\textrm{-}\TTTTT\textrm{-}\Alg _\ell $ the inclusion, if $R: \aaa\to\bbb $
is right biadjoint and has a lifting $J: \aaa\to \mathsf{Lax}\textrm{-}\TTTTT\textrm{-}\Alg $, 
then $\ell\circ J$ is right biadjoint as well provided that $\aaa $ has some needed weighted bicolimits. 
In order to prove such result, we study \textit{descent objects} and \textit{lax descent objects}.
At the last section, we study direct consequences of our theorems in the context of the 
\textit{$2$-monadic approach to coherence}. 
\end{abstract}

\section*{Introduction}
This paper has three main theorems. One of them (Theorem \ref{almostmain}) is about lifting of biadjoints: a generalization of Theorem 4.4 of \cite{Lucatelli1}. The others (Theorem \ref{GOAL} and Theorem \ref{GOAL2}) are consequences of the former on lifting biadjoints to the $2$-category of lax algebras. These results can be seen as part of what is called \textit{two-dimensional universal algebra}, or, more precisely, \textit{two-dimensional monad theory}: for an idea of the scope of this field (with applications), see for instance \cite{Power, Power89, Power2000, SLACKK, SLACK2020, HER, Bourke, Lucatelli1, Lucatelli2}.

There are several theorems about lifting of adjunctions in the literature~\cite{Barr, BORGERTHOLEN, Johnstone, STWW, Tholen}, including, for instance, adjoint triangle theorems~\cite{Dubuc, BarrWells}. Although some of these results can be proved for enriched categories or more general contexts~\cite{POWERR, Lucatelli1}, they often are not enough to deal with problems within $2$-dimensional category theory. The reason is that these problems involve concepts that are not of (strict/usual) $\Cat$-enriched category theory nature, as it is explained in \cite{CompLACK, Power}. 

For example, in $2$-dimensional category theory, the enriched notion of monad, the $2 $-monad, gives rise to the $2$-category of (strict/enriched) algebras, but it also gives rise to the $2$-category of pseudoalgebras and the $2$-category of lax algebras. The last two types of $2$-categories of algebras (and full sub-$2$-categories of them) are usually of the most interest despite the fact that they  are not ``strict'' notions.  

In short, most of the aspects of $2$-dimensional universal algebra are not covered by the usual $\Cat$-enriched category theory of \cite{Kelly, Dubuc2} or by the \textit{formal theory of monads} of \cite{RS72}. Actually, in the context of pseudomonad theory, the appropriate analogue of the formal theory of monads is the formal theory (and definition) of pseudomonads of \cite{MARMOLEJOD, SLACK2000}.
In this direction, the problem of lifting biadjunctions is the appropriate analogue of the problem of lifting adjunctions.

Some results on lifting of biadjunctions are consequences of the biadjoint triangle theorems proved in \cite{Lucatelli1}. One of these consequences is the following: let $\TTTTT $ be a pseudomonad on a $2$-category $\bbb $. Assume that  $R:\aaa\to\bbb $, $J: \aaa\to \mathsf{Ps}\textrm{-}\TTTTT\textrm{-}\Alg  $ are pseudofunctors such that we have the pseudonatural equivalence below. If $R$ is right biadjoint then $J$ is right biadjoint as well provided that $\aaa $ has some needed codescent objects.
$$\xymatrix{\aaa\ar[rr]^-{J}\ar[dr]_-{R}&&\mathsf{Ps}\textrm{-}\TTTTT\textrm{-}\Alg\ar[dl]^U\\
&\bbb\ar@{}[u]|-{\simeq } & }$$
One simple application of this result is, for instance, within the $2$-monadic approach to coherence~\cite{Lucatelli1}: roughly, the $2$-monadic approach to coherence is the study of biadjunctions and $2$-adjunctions between the many types of $2$-categories of algebras rising from a given $2$-monad. This allows us to prove ``general coherence results''~\cite{Power89, Power, SLACK2020} which encompass many coherence results -- such as the strict replacement of monoidal categories, the strict/flexible replacement of bicategories~\cite{SLACK2020, SLACK2007}, the strict/flexible replacement of pseudofunctors~\cite{Power} and so on~\cite{NICK}.  

If $\TTTTT ' $ is a $2$-monad, the result described above gives the construction of the left biadjoint to the inclusion
$$\TTTTT '\textrm{-}\Alg _ {\textrm{s}}\to\mathsf{Ps}\textrm{-}\TTTTT '\textrm{-}\Alg $$
subject to the existence of some codescent objects in $\TTTTT '\textrm{-}\Alg _ {\textrm{s}}$. The strict version of the biadjoint triangle theorem of \cite{Lucatelli1} shows when we can get a genuine left $2$-adjoint to this inclusion (and also studies when the unit is a pseudonatural equivalence), getting the coherence results of \cite{SLACK2020} w.r.t. pseudoalgebras.

In this paper, we prove Theorem \ref{almostmain} which is a generalization of Theorem 4.3 of \cite{Lucatelli1} on biadjoint triangles. Our result allows us to study lifting of biadjunctions to  lax algebras. Hence, we prove the analogue of the result described above for lax algebras. More precisely, let $\TTTTT $ be a pseudomonad on a $2$-category $\bbb $ and let $\ell: \mathsf{Lax}\textrm{-}\TTTTT\textrm{-}\Alg \to\mathsf{Lax}\textrm{-}\TTTTT\textrm{-}\Alg _ \ell$ be the locally full inclusion of the $2$-category of lax $\TTTTT $-algebras and $\TTTTT $-pseudomorphisms into the $2$-category of lax $\TTTTT $-algebras and lax $\TTTTT $-morphisms. Assuming that
$$\xymatrix{\aaa\ar[rr]^-{J}\ar[dr]_-{R}&&\mathsf{Lax}\textrm{-}\TTTTT\textrm{-}\Alg \ar[dl]\\
&\bbb\ar@{}[u]|-{\simeq } & }$$
is a pseudonatural equivalence in which $R$ is right biadjoint, we prove that $J$ is right biadjoint as well, provided that $\aaa $ has some needed codescent objects. Moreover, $\ell\circ J $ is right biadjoint if and only if $\aaa $ has lax codescent objects of some special diagrams. Still, we study when we can get  strict left $2$-adjoints to $J$ and $\ell\circ J$, provided that $J $ is a $2$-functor.

As an immediate application, we also prove general coherence theorems related to the work of \cite{SLACK2020}: we get the construction of the left biadjoints of the inclusions
$$\TTTTT '\textrm{-}\Alg _ {\textrm{s}}\to\mathsf{Lax}\textrm{-}\TTTTT '\textrm{-}\Alg _ \ell\qquad\qquad \mathsf{Ps}\textrm{-}\TTTTT  \textrm{-}\Alg \to\mathsf{Lax}\textrm{-}\TTTTT\textrm{-}\Alg _\ell$$
provided that $\TTTTT ' $ is a $2$-monad, $\TTTTT $ is a pseudomonad and $\TTTTT '\textrm{-}\Alg _ {\textrm{s}}$,  $\mathsf{Ps}\textrm{-}\TTTTT  \textrm{-}\Alg $ have some needed lax codescent objects. 

We start in Section \ref{Bilimits} establishing our setting: we recall basic results and definitions, such as weighted bicolimits and computads. In Section \ref{lift}, we give our main theorems on lifting of biadjoints: these are simple but pretty general results establishing basic techniques to prove theorems on lifting of biadjoints. These techniques apply to the context of \cite{Lucatelli1} but also apply to the study of other biadjoint triangles, such as our main application - which is the lifting of biadjoints to the $2$-category of lax algebras.

Then, we restrict our attention to $2$-dimensional monad theory: in order to do so, we present the weighted bicolimits called lax codescent objects and codescent objects in Section \ref{Descent}. Our approach to deal with descent objects is more general than the approach of \cite{RS87, Lucatelli1, Lucatelli2}, since it allows us to study descent objects of more general diagrams. 
Thanks to this approach, in Section \ref{Laxalgebras}, after defining pseudomonads and lax algebras, we show how we can get the category of pseudomorphisms between two lax algebras as a descent object at Proposition \ref{laxeffectivedescent}. This result also  shows how we can get the category of lax morphisms between two lax algebras as a lax descent object. 

In Section \ref{l}, we prove our main results on lax algebras:  Theorem \ref{GOAL} and Theorem \ref{GOAL2}. They are direct consequences of the results of Section \ref{lift} and Section \ref{Laxalgebras}, but we also give explicit calculations of the weighted bicolimits/weighted $2$-colimits needed in $\aaa $ to get the left biadjoints/left $2$-adjoints. We finish the paper in Section \ref{straightforward} giving straightforward applications of our results within the context of the $2$-monadic approach to coherence explained above.

This work was realized in the course of my PhD studies at University of Coimbra. I wish to thank my supervisor Maria Manuel Clementino for her support, attention and useful feedback.

\section{Preliminaries}\label{Bilimits}

In this section, we recall some basic results related to our setting, which is the tricategory $2\textrm{-}\CAT $ of $2$-categories, pseudofunctors, pseudonatural transformations and modifications. Most of what we need was originally presented in \cite{BE, RS76, RS80, RS87}. Also, for elements of enriched category theory, see \cite{Kelly}. We use the notation established in Section 2 of \cite{Lucatelli1} for pseudofunctors, pseudonatural transformations and modifications.

We start with considerations about size.
Let $\cat = \mathsf{int}\textrm{(}\Set\textrm{)} $ be the cartesian closed \textit{category of small categories}. Also, assume that $\Cat, \CAT $ are cartesian closed categories of categories in two different universes such that $\cat $ is an internal category of the subcategory of discrete categories of $\Cat $, while $\Cat$ is itself an internal category of the subcategory of discrete categories of $\CAT $.  Since these three categories of categories are complete and cartesian closed, they are enriched over themselves and they are cocomplete and complete in the enriched sense.

Henceforth, \textit{$\Cat$-category} is a $\Cat $-enriched category such that its collection of objects is a discrete category of $\CAT $. Thereby,  we have that $\Cat $-categories can be seen as internal categories of $\CAT $ such that their categories of objects are discrete. In other words, there is a full inclusion $\Cat\textrm{-}\CAT\to\mathsf{int}\textrm{(} \CAT\textrm{)}$ in which $\Cat\textrm{-}\CAT$ denotes the category of $\Cat$-categories. Moreover, since there is a forgetful functor $\mathsf{int}\textrm{(} \CAT\textrm{)}\to\CAT$, there is a forgetful functor $\Cat\textrm{-}\CAT\to\CAT $.

So, we adopt the following terminology: Firstly, a \textit{$2$-category} is a $\Cat $-category. Secondly, a \textit{possibly (locally) large $2$-category} is an internal category of $\CAT $ such that its category of objects is discrete. Finally, a \textit{small $2$-category} is a $2$-category which can be seen as an internal category of $\cat $. 

Let $\WWWW : \sss\to \Cat, \WWWW ' : \sss ^{\op }\to \Cat $ and $\DDDD: \sss\to\aaa $ be $2$-functors with small domains. If it exists, we denote the \textit{weighted limit} of $\DDDD $ with weight $\WWWW $ by  $\left\{ \WWWW, \DDDD\right\} $. Dually, we denote by $\WWWW '\Asterisk \DDDD $ the \textit{weighted colimit} provided that it exists. 

\begin{rem}
Consider the category, denoted in this remark by $\sss _ {\textrm{ist}}$ with two objects and two parallel arrows between them.
We can define the weight 
$$\WWWW _ {\textrm{insert}}: \sss _ {\textrm{ist}}\to\Cat $$
$$\xymatrix{ \mathsf{1} \ar@<-1.7 ex>[r]\ar@<1.7 ex>[r] & \mathsf{2}\ar@{|->}[rr]&& I \ar@<-1.7 ex>[r]_{\textrm{domain}}\ar@<1.7 ex>[r]^{\textrm{codomain} } & \textrm{2}
}$$
in which $\textrm{2}$ is the category with two objects and only one morphism between them and $I$ is the terminal category. The colimits with this weight are called \textit{coinserters} (see \cite{Kelly2}).
\end{rem}

The \textit{bicategorical Yoneda Lemma} says that  there is a pseudonatural equivalence $$[\sss , \Cat ]_ {PS} (\sss (a, - ), \mathcal{D})\simeq \DDDD a $$ given by the evaluation at the identity, in which $[\sss , \Cat ]_ {PS}$ is the possibly large $2$-category of pseudofunctors, pseudonatural transformations and modifications $\sss\to\Cat $. As a consequence, the Yoneda embedding $\YYYY _{{}_{\sss ^{\op } }} :\sss ^{\op }\to [\sss , \Cat ]_ {PS}$  is locally an equivalence (\textit{i.e.} it induces equivalences between the hom-categories).

If $ \WWWW  : \sss \to \Cat, \DDDD :\sss\to\aaa $ are pseudofunctors with a small domain, recall that the \textit{weighted bilimit}, when it exists, is an object $\left\{ \WWWW, \DDDD\right\} _ {\bi}$ of $\aaa $ endowed with a pseudonatural equivalence (in $X$)
$ \aaa (X, \left\{ \WWWW, \DDDD\right\} _ {\bi})\simeq  [\sss , \Cat ]_{PS}(\WWWW, \aaa (X, \DDDD -) )$.

The dual concept is that of weighted bicolimit: if $ \WWWW ' : \sss ^{\op } \to \Cat, \DDDD :\sss\to\aaa $ are pseudofunctors, the weighted bicolimit $ \WWWW '\Asterisk _ {\bi } \DDDD $ is the weighted bilimit $\left\{ \WWWW ', \DDDD ^{\op }\right\} _ {\bi}$ in $\aaa ^{\op } $. That is to say, it is an object $ \WWWW '\Asterisk _{\bi } \DDDD $ of $\aaa $ endowed with a pseudonatural equivalence (in $X$)
$ \aaa (\WWWW '\Asterisk _{\bi } \DDDD, X)\simeq  [\sss ^\op , \Cat ]_{PS}(\WWWW ', \aaa (\DDDD -, X ) )$.
By the bicategorical Yoneda Lemma,  $\left\{ \WWWW, \DDDD\right\}_ {\bi}, \WWWW '\Asterisk _{\bi } \DDDD $ are unique up to equivalence, if they exist.

\begin{rem}
If $\WWWW$ and $\DDDD $ are $2$-functors, $\left\{ \WWWW, \DDDD\right\} _ {\bi}$ and $\left\{ \WWWW, \DDDD\right\} $ may exist, without being equivalent to each other. This problem is related to the notion of flexible presheaves/weights (see \cite{Flexible}): whenever $\WWWW $ is \textit{flexible}, these two types of limits are equivalent, if they exist.
\end{rem}

\begin{defi}\label{ad}
Let 
$R:\aaa\to\bbb , E:\bbb\to\aaa $
be pseudofunctors. $E$ is
\textit{left biadjoint} to $R$ (or $R$ is \textit{right biadjoint} to $E$) if there exist
\begin{enumerate}
\item
pseudonatural transformations $\rho :\Id _ {\bbb } \longrightarrow RE $ and
$\varepsilon :ER \longrightarrow \Id _ { \aaa }$
\item
invertible modifications
$v : \id _{E} \Longrightarrow (\varepsilon E)  (E\rho )$
and
$w : (R\varepsilon)  (\rho R ) \Longrightarrow \id _{R}$
\end{enumerate}
satisfying coherence axioms~\cite{Lucatelli1}.
\normalsize
\end{defi}

\begin{rem}\label{addd}
Recall that a biadjunction $(E\dashv R, \rho , \varepsilon , v,w)$  has an associated pseudonatural equivalence 
$\chi : \bbb (-,R-)\simeq \aaa (E -, -)$,
in which  
\small
\begin{eqnarray*}
\chi _ {{}_{(X,Z)}}: &\bbb (X, R A) &\to \aaa (E X, A)\\
                &f        &\mapsto \varepsilon _ {{}_{A}} E(f)\\
								&\mathfrak{m} &\mapsto \id _ {{}_{\varepsilon _ {{}_{A}}}}\ast E(\mathfrak{m})
\end{eqnarray*}
\begin{eqnarray*}
\left(\chi _ {{}_{(h, g)}}\right) _ {{}_f} &:=& \left(\id _ {{}_{\varepsilon _ {{}_{A}} }}\ast \eeee _{{}_{(hf)R(g)}}\right) \cdot \left(\varepsilon _ {{}_{g}}\ast\eeee _ {{}_{hf}}\right).
\end{eqnarray*}		
\normalsize		
\end{rem}

If $L, U$ are $2$-functors, we say that $L$ is \textit{left $2$-adjoint} to $U$ whenever there is a biadjunction $(L\dashv  U, \eta , \varepsilon , s, t) $ in which $s, t $ are identities and $\eta , \varepsilon $ are $2$-natural transformations. In this case, we say that $(L\dashv U, \eta , \varepsilon ) $ is a \textit{$2$-adjunction}.

\subsection{On computads}
We employ the concept of computad, introduced in \cite{RS76}, to define the $2$-categories $\dot{\Delta } _ {\ell }, \dot{\Delta } , \Delta _ \ell $ in Section \ref{Descent}. For this reason, we give a short introduction to computads in this subsection.

Herein a \textit{graph} $\mathrm{G} = (d_1, d_ 0)$ is a pair of functors $d_0, d_ 1: \mathrm{G} _ {{}_{1}}\to \mathrm{G} _ {{}_{0}}$ between discrete categories of $\CAT $. In this case, $\mathrm{G} _ {{}_{0}}$ is called the collection of objects and, for each pair of objects $(a,b)$ of $\mathrm{G} _ {{}_{0}}$,  $d_ 0^{-1} (a)\cap d_1^{-1} (b) =\mathrm{G}(a,b) $ is the collection of arrows between $a$ and $b$.  A \textit{graph morphism} $\textrm{T}$ between $\mathrm{G}, \mathrm{G}'$ is a function $\textrm{T}:\mathrm{G} _ {{}_{0}}\to \mathrm{G} _ {{}_{0}}'$ endowed with a function $\textrm{T}_ {(a,b)}:\mathrm{G}(a,b)\to \mathrm{G}'(\textrm{T} a, \textrm{T} b)$ for each pair $(a,b)$ of objects in $\mathrm{G} _ {{}_{0}}$. That is to say, a graph morphism $\textrm{T}=(\textrm{T}_ {{}_{1}}, \textrm{T}_ {{}_{0}})$ is a natural transformation between graphs. The category of graphs is denoted by $\mathsf{GRPH}$.

We also define the full subcategories of $\mathsf{GRPH}$, denoted by $\mathsf{Grph}$ and $\mathsf{grph}$: the objects of $\mathsf{Grph}$ are graphs in the subcategory of discrete categories of $\Cat$ and the objects of $\mathsf{grph}$, called \textit{small graphs}, are graphs  in the subcategory of discrete categories of $\cat $. The forgetful functors $\CAT\to\mathsf{GRPH}$, $\Cat\to \mathsf{Grph} $ and $\cat\to \mathsf{grph} $ have left adjoints.

We denote by $\mathcal{F} : \mathsf{GRPH}\to  \CAT $ the functor left adjoint to $\CAT\to \mathsf{GRPH} $ and $ \mathfrak{F} $ the monad on $\mathsf{GRPH}$ induced by this adjunction. If $\mathrm{G}= (d_1, d_0) $ is an object of $\mathsf{GRPH}$, $\mathcal{F}\mathrm{G}$ is the coinserter of this diagram $(d_1, d_0)$.

Recall that $\mathcal{F}\mathrm{G}$, called the category freely generated by $\mathrm{G}$, can be seen as the category with the same objects of $\mathrm{G}$ but the arrows between two objects $a,b$ are the paths between $a,b$ (including the empty path): composition is defined by juxtaposition of paths.

\begin{defi}[Computad]
A \textit{computad} $\mathfrak{c}$ is a graph $\mathfrak{c}^ \mathrm{G}$ endowed with a graph $\mathfrak{c} (a,b)$  such that $\mathfrak{c} (a,b) _ {{}_{0}} = \left(\mathcal{F}\mathfrak{c}^ \mathrm{G}\right)(a,b) $ for each pair $(a,b)$ of objects of $\mathfrak{c}^\mathrm{G}$.
\end{defi}  

\begin{rem}\label{diaggraph}
A \textit{small computad} is a computad $\mathfrak{c}$ such that the graphs $\mathfrak{c}^ \mathrm{G}$ and $\mathfrak{c}(a,b)$ are small for every pair $(a,b) $ of objects of $\mathfrak{c}^ \mathrm{G}$. Such a computad can be entirely described by a diagram  
$$\xymatrix{  \mathfrak{c} _ {{}_{2}} \ar@<1ex>[r]^-{\partial _ 0 }\ar@<-1ex>[r]_-{\partial _1 } &\left( \mathfrak{F} \mathfrak{c}^ \mathrm{G}\right) _ {{}_{1}} 
\ar@<1ex>[r]^-{d _ 0 }\ar@<-1ex>[r]_-{d _1 } &\mathfrak{c}^ \mathrm{G} _ {{}_{0}}  }$$
in $\Set $ such that:
\begin{itemize}
\renewcommand\labelitemi{--}
\item  $(d_1,d_0) $ is the graph $\mathfrak{F} \mathfrak{c}^ \mathrm{G}$;
\item $\displaystyle\mathfrak{c} _ {{}_{2}}:= \bigcup _ {(a,b)\in\mathfrak{c}^ \mathrm{G} _ {{}_{0}}\times\mathfrak{c}^ \mathrm{G} _ {{}_{0}} } \mathfrak{c} (a,b) _ {{}_{1}}$;
\item $d_1\partial _ 1 = d_1\partial _ 0 $  and  $d_0\partial _ 1 = d_0\partial _ 0 $.
\end{itemize}
\end{rem}

A \textit{morphism $\textrm{T}$ between computads} $\mathfrak{c}, \mathfrak{c}'$ is a graph morphism  $\textrm{T}^\mathrm{G} : \mathfrak{c}^\mathrm{G}\to {\mathfrak{c}'}^ \mathrm{G} $ endowed with a graph morphism $\textrm{T}^ {(a,b)}: \mathfrak{c} (a,b)\to \mathfrak{c} (\textrm{T}^\mathrm{G} a,\textrm{T}^\mathrm{G} b)$  for each pair of objects $(a,b) $ in $\mathfrak{c}^ \mathrm{G}$ such that $\textrm{T}^ {(a,b)}_{{}_{0}}$ coincides with  $\mathcal{F}(\textrm{T}^\mathrm{G} )_{(a,b)}$. The \textit{category of computads} is denoted by $\mathsf{CMP} $.

We can define a forgetful functor $\mathcal{U}: \Cat\textrm{-}\CAT\to \mathsf{CMP} $ in which $\left(\mathcal{U}\aaa\right) ^ \mathrm{G}$ 
 is the underlying graph of the underlying category of $\aaa $. Recall that, for each pair of objects $(a,b)$ of $\left(\mathcal{U}\aaa\right) ^\mathrm{G}$,   an  object $f$ of $\left(\mathcal{U}\aaa\right)(a,b) $ is a path between $a$ and $b$. Then the composition defines a map   $\circ : \left(\mathcal{U}\aaa\right)(a,b)\to \aaa (a,b) $ and we can define the arrows of the graphs $\left(\mathcal{U}\aaa\right) (a,b)$ as follows:
$\left(\mathcal{U}\aaa\right) (a,b)(f,g) : = \aaa (a,b) (\circ (f), \circ(g))
$.

The left reflection of a small computad $\mathfrak{c} $ along $\mathcal{U}$ is denoted by $\mathcal{L}\mathfrak{c} $ and called the \textit{$2$-category freely generated by $\mathfrak{c} $}. The underlying category of $\mathcal{L}\mathfrak{c} $ is $\mathcal{F} \mathfrak{c}^ \mathrm{G}$ and its $2$-dimensional structure is constructed below.

$$\xymatrix{  \mathfrak{c} _ {{}_{2}}\coprod \left( \mathfrak{F} \mathfrak{c}^ \mathrm{G}\right) _ {{}_{1}}\ar@<0.5ex>[d]^-{\partial _ 0, \id }\ar@<-0.5ex>[d]_-{\partial _ 1, \id } \ar@<1ex>[rr]^-{d_0\partial _ 0, \mbox{ } d_0}\ar@<-1ex>[rr]_-{d_1\partial _ 0,\mbox{ } d_1} && \mathfrak{c}^ \mathrm{G} _ {{}_{0}}\ar[d]^-{\id}\\
\left( \mathfrak{F} \mathfrak{c}^ \mathrm{G}\right) _ {{}_{1}} 
\ar@<1ex>[rr]^-{d _ 0 }\ar@<-1ex>[rr]_-{d _1 } &&\mathfrak{c}^ \mathrm{G} _ {{}_{0}}  }$$
The diagram of Remark \ref{diaggraph} induces the graph morphisms $((\partial _ 0, \id ), \id) $ and $((\partial _ 1, \id), \id)$ above between a graph denoted by $\mathfrak{c}^- $ and $\mathfrak{F}\mathfrak{c} ^\mathrm{G}$. Using the multiplication of the monad $\mathfrak{F}$, these morphisms induce two morphisms $\mathfrak{F}\mathfrak{c}^-\to \mathfrak{F}\mathfrak{c} ^\mathrm{G}$. These two morphisms define in particular the graph $\mathfrak{c}_-$ below  and $\mathcal{F}\mathfrak{c}_-$ defines the $2$-dimensional structure of $\mathcal{L}\mathfrak{c} $.
$$\xymatrix{  \left(\mathfrak{F}\mathfrak{c}^-\right)_ {{}_{1}} -\left( \mathfrak{F}^2 \mathfrak{c}^ \mathrm{G}\right) _ {{}_{1}}  \ar@<1ex>[r]\ar@<-1ex>[r] & \left( \mathfrak{F} \mathfrak{c}^ \mathrm{G}\right) _ {{}_{1}}  }$$
Defining all compositions by juxtaposition, we have a sesquicategory (see \cite{categorical}). 
We define $\mathcal{L} \mathfrak{c} $ to be the $2$-category obtained from the quotient of this sesquicategory, forcing the interchange laws.

\begin{rem}
Let $\mathsf{Preord} $ the category of preordered sets. We have an inclusion $\mathsf{Preord}\to \Cat $ which is right adjoint. This adjunction induces a $2$-adjunction between $\mathsf{Preord}\textrm{-}\CAT $ and $\Cat\textrm{-}\textrm{CAT} $.

If $\mathfrak{c} $ is a computad, \textit{the locally preordered $2$-category freely generated by $\mathfrak{c} $} is the image of $\mathcal{L}\mathfrak{c} $ by the left $2$-adjoint functor $\Cat\textrm{-}\textrm{CAT}\to\mathsf{Preord}\textrm{-}\CAT $.

\end{rem}

\section{Lifting of biadjoints}\label{lift}

In this section, we assume that a small weight $\WWWW : \sss\to\Cat $, a right biadjoint pseudofunctor $R:\aaa\to\bbb $  and a pseudofunctor $J: \aaa\to\ccc $ are given. We investigate whether $J$ is right biadjoint. 

We establish Theorem \ref{almostmain} and its immediate corollary on biadjoint triangles. We omit the proof of Lemma  \ref{lemmainstrict}, since it is analogous to the proof of Lemma \ref{lemmain}.

\begin{lem}\label{lemmain}
Assume that, for each object $\mathtt{y}$ of $\ccc$, there are pseudofunctors $\mathfrak{D} _ {\mathtt{y} } :\sss\times \aaa \to  \Cat, \AAAA _ \mathtt{y}: \sss ^{\op }\to\aaa  $   such that $\mathfrak{D} _ {\mathtt{y}}\simeq \aaa(\AAAA _ \mathtt{y} -, -) $ and
$\left\{ \WWWW , \mathfrak{D}  _{\mathtt{y} }(-,A)\right\} _ {\bi }\simeq \ccc (\mathtt{y}, JA) $ for each object $A$ of $\aaa $. The pseudofunctor $J$ is right biadjoint if and only if, for every object $\mathtt{y}$ of $\ccc $, the weighted bicolimit  $\WWWW\Asterisk _ {\bi } \AAAA _ \mathtt{y} $ exists in $\aaa $. In this case, $J$ is right biadjoint to $G$, defined by $G\mathtt{y} = \WWWW\Asterisk _ {\bi } \AAAA _\mathtt{y} $.
\end{lem}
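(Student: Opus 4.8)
The plan is to reduce the statement to the standard characterization of right biadjoints by birepresentability. Recall that a pseudofunctor $J\colon\aaa\to\ccc$ is right biadjoint precisely when, for every object $\mathtt{y}$ of $\ccc$, the pseudofunctor $\ccc(\mathtt{y}, J-)\colon\aaa\to\Cat$ is birepresentable, i.e.\ there is an object, say $G\mathtt{y}$, together with a pseudonatural equivalence $\aaa(G\mathtt{y}, -)\simeq\ccc(\mathtt{y}, J-)$; in that case the chosen birepresenting objects assemble, by the usual two-dimensional machinery, into a left biadjoint $G$. So I would fix an object $\mathtt{y}$ of $\ccc$ and show that $\ccc(\mathtt{y}, J-)$ is birepresented by $\WWWW\Asterisk_{\bi}\AAAA_{\mathtt{y}}$ exactly when this weighted bicolimit exists.

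The heart of the argument is a chain of pseudonatural equivalences in the variable $X$. By the first hypothesis $\ccc(\mathtt{y}, JX)\simeq\left\{\WWWW, \mathfrak{D}_{\mathtt{y}}(-, X)\right\}_{\bi}$, and I would use that $\mathfrak{D}_{\mathtt{y}}$ is a pseudofunctor in its second argument to see this equivalence as pseudonatural in $X$, not merely pointwise. The hypothesis $\mathfrak{D}_{\mathtt{y}}\simeq\aaa(\AAAA_{\mathtt{y}}-, -)$ then transports it to $\left\{\WWWW, \aaa(\AAAA_{\mathtt{y}}-, X)\right\}_{\bi}$, using that weighted bilimits in $\Cat$ send pseudonatural equivalences of diagrams to equivalences. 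Finally, a weighted bilimit in the base $\Cat$ is computed as a category of pseudonatural transformations, so that altogether
$$\ccc(\mathtt{y}, JX)\simeq\left\{\WWWW, \mathfrak{D}_{\mathtt{y}}(-, X)\right\}_{\bi}\simeq\left\{\WWWW, \aaa(\AAAA_{\mathtt{y}}-, X)\right\}_{\bi}\simeq[\sss, \Cat]_{PS}(\WWWW, \aaa(\AAAA_{\mathtt{y}}-, X)),$$
pseudonaturally in $X$. The right-hand side is exactly the expression occurring in the defining universal property of the weighted bicolimit $\WWWW\Asterisk_{\bi}\AAAA_{\mathtt{y}}$.

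With this chain in hand both implications are immediate. If every $\WWWW\Asterisk_{\bi}\AAAA_{\mathtt{y}}$ exists, then $\aaa(\WWWW\Asterisk_{\bi}\AAAA_{\mathtt{y}}, X)\simeq[\sss, \Cat]_{PS}(\WWWW, \aaa(\AAAA_{\mathtt{y}}-, X))\simeq\ccc(\mathtt{y}, JX)$ pseudonaturally, so $\ccc(\mathtt{y}, J-)$ is birepresented by $\WWWW\Asterisk_{\bi}\AAAA_{\mathtt{y}}$ and hence $J$ is right biadjoint to the pseudofunctor $G$ with $G\mathtt{y}=\WWWW\Asterisk_{\bi}\AAAA_{\mathtt{y}}$. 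Conversely, if $J$ is right biadjoint with left biadjoint $G$, then $\aaa(G\mathtt{y}, X)\simeq\ccc(\mathtt{y}, JX)\simeq[\sss, \Cat]_{PS}(\WWWW, \aaa(\AAAA_{\mathtt{y}}-, X))$ pseudonaturally in $X$; this is precisely the universal property defining $\WWWW\Asterisk_{\bi}\AAAA_{\mathtt{y}}$, so the bicolimit exists and is equivalent to $G\mathtt{y}$.

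The only genuine obstacle is bookkeeping rather than conceptual: I must make sure that every equivalence above is truly pseudonatural in $X$, so that it can be fed into the representability criterion, rather than being merely a family of equivalences indexed pointwise by the objects $X$. Concretely, this amounts to checking that $X\mapsto\left\{\WWWW, \mathfrak{D}_{\mathtt{y}}(-, X)\right\}_{\bi}$ and $X\mapsto[\sss, \Cat]_{PS}(\WWWW, \aaa(\AAAA_{\mathtt{y}}-, X))$ define pseudofunctors $\aaa\to\Cat$ and that the transported equivalences respect this structure, which I would derive from pseudofunctoriality of $\mathfrak{D}_{\mathtt{y}}$ in its second variable, from the bifunctoriality of weighted bilimits, and from the bicategorical Yoneda Lemma. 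Assembling the chosen representing objects into the left biadjoint pseudofunctor $G\colon\ccc\to\aaa$ is then handled by the standard two-dimensional argument, which I would cite rather than reconstruct.
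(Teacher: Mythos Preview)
Your proposal is correct and follows essentially the same approach as the paper: both establish the chain of pseudonatural equivalences $\ccc(\mathtt{y}, JA)\simeq\{\WWWW,\mathfrak{D}_{\mathtt{y}}(-,A)\}_{\bi}\simeq\{\WWWW,\aaa(\AAAA_{\mathtt{y}}-,A)\}_{\bi}$ and then observe that an object $G\mathtt{y}$ is the weighted bicolimit $\WWWW\Asterisk_{\bi}\AAAA_{\mathtt{y}}$ if and only if it birepresents $\ccc(\mathtt{y},J-)$. Your write-up is in fact more explicit than the paper's, spelling out the final identification with $[\sss,\Cat]_{PS}(\WWWW,\aaa(\AAAA_{\mathtt{y}}-,A))$ and flagging the pseudonaturality bookkeeping, but the underlying argument is the same.
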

\begin{proof}
There is a pseudonatural equivalence (in $A$)   
$$\left\{\WWWW , \aaa(\AAAA _ \mathtt{y} -, A)\right\} _ {\bi} \simeq \left\{ \WWWW , \mathfrak{D}  _{\mathtt{y} }(-,A)\right\} _ {\bi } \simeq \ccc (\mathtt{y}, JA) .$$
Thereby, an object $G\mathtt{y}$ of $\aaa $ is the weighted bicolimit $\WWWW\Asterisk _ {\bi } \AAAA _\mathtt{y} $ if and only if there is a pseudonatural equivalence (in $A$) 
$\aaa (G\mathtt{y}, A)\simeq \left\{\WWWW , \aaa(\AAAA _ \mathtt{y} -, A)\right\} _ {\bi}\simeq \ccc (\mathtt{y}, JA)$.
That is to say, an object $G\mathtt{y} $ of $\aaa $ is the weighted bicolimit $\WWWW\Asterisk _ {\bi } \AAAA _\mathtt{y} $ if and only if $G\mathtt{y} $ is a birepresentation of $\ccc (\mathtt{y}, J-) $. 
\end{proof}

\begin{lem}\label{lemmainstrict}
Assume that $J , \WWWW  $  are $2$-functors and, for each object $\mathtt{y}$ of $\ccc$, there are $2$-functors $\mathfrak{D} _ {\mathtt{y} } :\sss\times \aaa \to  \Cat ,\AAAA _ \mathtt{y}: \sss ^{\op }\to\aaa  $   such that there is a $2$-natural isomorphism $\mathfrak{D} _ {\mathtt{y} }\cong \aaa(\AAAA _ \mathtt{y} -, -) $ and $\left\{ \WWWW , \mathfrak{D}  _{\mathtt{y} }(-,A)\right\} \cong \ccc (\mathtt{y}, JA) $ for every object $A$ of $\aaa $.
The $2$-functor $J$ is right $2$-adjoint if and only if, for every object $\mathtt{y}$ of $\ccc $, the weighted colimit  $\WWWW\Asterisk  \AAAA _ \mathtt{y} $ exists in $\aaa $. In this case, $J$ is right $2$-adjoint to $G$, defined by $G\mathtt{y} = \WWWW\Asterisk  \AAAA _\mathtt{y} $.
\end{lem}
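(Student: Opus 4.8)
The plan is to follow the proof of Lemma~\ref{lemmain} essentially verbatim, replacing each pseudonatural equivalence by a $2$-natural isomorphism, each weighted bilimit by the corresponding strict weighted limit, and each birepresentation by a genuine representation. The only genuinely new point is the final assembly: one must check that the pointwise data cohere into an honest $\Cat$-enriched adjunction rather than merely a biadjunction, and this is where the strictness of the hypotheses is essential.

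Fix an object $\mathtt{y}$ of $\ccc$. Since $\AAAA_\mathtt{y}\colon\sss^{\op}\to\aaa$ takes values in $\aaa$, the $\Cat$-enriched diagram $\aaa(\AAAA_\mathtt{y}-,A)\colon\sss\to\Cat$ is defined for each object $A$, and the defining universal property of the weighted colimit in the $\Cat$-enriched sense provides a $2$-natural isomorphism (in $A$)
$$\aaa\left(\WWWW\Asterisk\AAAA_\mathtt{y},\, A\right)\cong\left\{\WWWW,\,\aaa(\AAAA_\mathtt{y}-,A)\right\},$$
whenever the left-hand side exists. The hypothesis $\mathfrak{D}_\mathtt{y}\cong\aaa(\AAAA_\mathtt{y}-,-)$ is a $2$-natural isomorphism, so applying the weight functor $\left\{\WWWW,-\right\}=[\sss,\Cat](\WWWW,-)$, which preserves isomorphisms, together with the hypothesis $\left\{\WWWW,\mathfrak{D}_\mathtt{y}(-,A)\right\}\cong\ccc(\mathtt{y}, JA)$, yields the chain of $2$-natural isomorphisms (in $A$)
$$\left\{\WWWW,\,\aaa(\AAAA_\mathtt{y}-,A)\right\}\cong\left\{\WWWW,\,\mathfrak{D}_\mathtt{y}(-,A)\right\}\cong\ccc(\mathtt{y}, JA).$$

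Composing the two displays, an object $G\mathtt{y}$ of $\aaa$ is the weighted colimit $\WWWW\Asterisk\AAAA_\mathtt{y}$ if and only if there is a $2$-natural isomorphism $\aaa(G\mathtt{y}, A)\cong\ccc(\mathtt{y}, JA)$ in $A$; that is, if and only if $G\mathtt{y}$ represents the $2$-functor $\ccc(\mathtt{y}, J-)\colon\aaa\to\Cat$. This settles both directions of the pointwise statement: if $\WWWW\Asterisk\AAAA_\mathtt{y}$ exists for every $\mathtt{y}$, then each $\ccc(\mathtt{y}, J-)$ is representable; and conversely, if $J$ admits a left $2$-adjoint $G$, then $\aaa(G\mathtt{y}, A)\cong\ccc(\mathtt{y}, JA)\cong\left\{\WWWW,\aaa(\AAAA_\mathtt{y}-,A)\right\}$ exhibits $G\mathtt{y}$ as the weighted colimit $\WWWW\Asterisk\AAAA_\mathtt{y}$.

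It remains to promote these pointwise representations to a genuine $2$-adjunction, and this is the step I expect to require the most care, since the corresponding step of Lemma~\ref{lemmain} only produces a biadjunction. The assignment $\mathtt{y}\mapsto\ccc(\mathtt{y}, J-)$ is strictly $2$-functorial in $\mathtt{y}$, defining a $2$-functor $\ccc^{\op}\to[\aaa,\Cat]$. By the $\Cat$-enriched Yoneda lemma, the strict representations $\aaa(G\mathtt{y},-)\cong\ccc(\mathtt{y}, J-)$ determine $G$ uniquely on $1$-cells and $2$-cells as a $2$-functor $G\colon\ccc\to\aaa$, and upgrade the family of isomorphisms to a single $2$-natural isomorphism $\aaa(G-,-)\cong\ccc(-, J-)$ of $2$-functors $\ccc^{\op}\times\aaa\to\Cat$. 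Such an isomorphism is exactly a $2$-adjunction $G\dashv J$, with $2$-natural unit and counit and identity invertible modifications. The crucial point, distinguishing this argument from its bicategorical counterpart, is that the $2$-naturality (rather than mere pseudonaturality) of the hypotheses feeds through the enriched Yoneda lemma to give strict, as opposed to weak, coherence data.
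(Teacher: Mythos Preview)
Your proposal is correct and follows essentially the same approach as the paper, which explicitly omits the proof of Lemma~\ref{lemmainstrict} as analogous to that of Lemma~\ref{lemmain}: the chain of $2$-natural isomorphisms you display is exactly the strict analogue of the pseudonatural equivalences used there. Your additional paragraph on assembling the pointwise representations into a genuine $2$-adjunction via the $\Cat$-enriched Yoneda lemma is a standard step the paper leaves implicit, and it is handled correctly.
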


Let $ \DDDD : \sss\times\aaa\to \Cat$ be a pseudofunctor. We denote by $\left|\DDDD \right| : \sss _ {{}_{0}}\times \aaa\to \Cat $ the restriction of $\DDDD $ in which $\sss _ {{}_{0}}$ is the discrete $2$-category  of the objects of $\sss $. Also, herein we say that  $\left|\DDDD \right| $ can be factorized through $R ^\ast := \bbb (-, R-)$ if there are a pseudofunctor $\DDDD ': \sss _ {{}_{0}}\to \bbb ^{\op }$ and a pseudonatural equivalence $ \left|\DDDD \right| \simeq  R^\ast\circ (\DDDD '\times \Id _ {{}_{\aaa }})$.

\begin{theo}\label{almostmain}
Assume that, for each object $\mathtt{y} $ of $\ccc$, there is a pseudofunctor $\mathfrak{D} _ {\mathtt{y}} :\sss \times \aaa \to \Cat $  such that $\left| \mathfrak{D} _ {\mathtt{y} }\right|$ can be factorized through $R^\ast $ and
$\left\{ \WWWW , \mathfrak{D}  _{\mathtt{y} }(-, A) \right\} _ {\bi }\simeq \ccc (\mathtt{y}, JA) $ for every object $A$ of $\aaa $. In this setting, for each object $\mathtt{y} $ of $\ccc $  there are a pseudofunctor $\AAAA _ \mathtt{y} :\sss ^{\op }\to\aaa  $ and a pseudonatural equivalence $\mathfrak{D} _ {\mathtt{y}}\simeq \aaa(\AAAA _ \mathtt{y} -, -) $.   

As a consequence, the pseudofunctor $J$ is right biadjoint if and only if, for every object $\mathtt{y}$ of $\ccc $, the weighted bicolimit  $\WWWW\Asterisk _ {\bi } \AAAA _ \mathtt{y} $ exists in $\aaa $. In this case, $J$ is right biadjoint to $G$, defined by $G\mathtt{y} = \WWWW\Asterisk _ {\bi } \AAAA _\mathtt{y} $.
\end{theo}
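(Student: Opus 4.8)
The plan is to first establish the existence of the pseudofunctors $\AAAA _ {\mathtt{y}}$ together with the claimed equivalences $\mathfrak{D} _ {\mathtt{y}}\simeq\aaa(\AAAA _ {\mathtt{y}}-,-)$, and then obtain the biadjoint characterization by a direct appeal to Lemma \ref{lemmain}. Fix an object $\mathtt{y}$ of $\ccc$. Let $(E\dashv R,\rho,\varepsilon,v,w)$ be the biadjunction witnessing that $R$ is right biadjoint, with associated pseudonatural equivalence $\chi\colon R^\ast=\bbb(-,R-)\simeq\aaa(E-,-)$ of Remark \ref{addd}. By hypothesis there are a pseudofunctor $\DDDD'\colon\sss _ {{}_{0}}\to\bbb^{\op}$ and a pseudonatural equivalence $\left|\mathfrak{D} _ {\mathtt{y}}\right|\simeq R^\ast\circ(\DDDD'\times\Id _ {{}_{\aaa}})$. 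Composing with $\chi$ yields, for each object $s$ of $\sss$, a pseudonatural equivalence (in $A$)
$$\mathfrak{D} _ {\mathtt{y}}(s,A)\simeq\bbb(\DDDD's,RA)\simeq\aaa(E\DDDD's,A),$$
so that $\mathfrak{D} _ {\mathtt{y}}(s,-)$ is birepresented by the object $E\DDDD's$ of $\aaa$. Thus $\mathfrak{D} _ {\mathtt{y}}$ is \emph{pointwise} (in the first variable, on objects) birepresentable; the factorization through $R^\ast$ is exactly what guarantees that the representing objects lie in $\aaa$.

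The remaining task is to upgrade this object-wise birepresentability to a genuine pseudofunctor on all of $\sss^{\op}$, compatibly with the $\sss$-variable structure already carried by $\mathfrak{D} _ {\mathtt{y}}$. For this I would route the argument through the bicategorical Yoneda Lemma rather than construct anything by hand. Currying the second variable, regard $\mathfrak{D} _ {\mathtt{y}}$ as a pseudofunctor $\widehat{\mathfrak{D}} _ {\mathtt{y}}\colon\sss\to[\aaa,\Cat] _ {PS}$. The Yoneda embedding $\YYYY _ {{}_{\aaa^{\op}}}\colon\aaa^{\op}\to[\aaa,\Cat] _ {PS}$ is locally an equivalence, hence restricts to a biequivalence onto its essential image $\mathrm{Rep}(\aaa)$, the full sub-$2$-category of birepresentable pseudofunctors. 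By the previous paragraph each object $\widehat{\mathfrak{D}} _ {\mathtt{y}}(s)$ lies in $\mathrm{Rep}(\aaa)$; since $\mathrm{Rep}(\aaa)$ is full, $\widehat{\mathfrak{D}} _ {\mathtt{y}}$ corestricts to a pseudofunctor $\sss\to\mathrm{Rep}(\aaa)$. Composing with a chosen pseudo-inverse of $\YYYY _ {{}_{\aaa^{\op}}}$ produces a pseudofunctor $\AAAA _ {\mathtt{y}}\colon\sss\to\aaa^{\op}$, i.e. $\AAAA _ {\mathtt{y}}\colon\sss^{\op}\to\aaa$, satisfying $\YYYY _ {{}_{\aaa^{\op}}}\circ\AAAA _ {\mathtt{y}}\simeq\widehat{\mathfrak{D}} _ {\mathtt{y}}$ with $\AAAA _ {\mathtt{y}}(s)\simeq E\DDDD's$ on objects; uncurrying this equivalence gives precisely $\mathfrak{D} _ {\mathtt{y}}\simeq\aaa(\AAAA _ {\mathtt{y}}-,-)$ as pseudofunctors $\sss\times\aaa\to\Cat$. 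This settles the first assertion, and since $\left\{\WWWW,\mathfrak{D} _ {\mathtt{y}}(-,A)\right\} _ {\bi}\simeq\ccc(\mathtt{y},JA)$ holds by assumption, the hypotheses of Lemma \ref{lemmain} are now met; the \emph{consequence} part — that $J$ is right biadjoint if and only if each $\WWWW\Asterisk _ {\bi}\AAAA _ {\mathtt{y}}$ exists in $\aaa$, with $G\mathtt{y}=\WWWW\Asterisk _ {\bi}\AAAA _ {\mathtt{y}}$ — then follows by invoking that lemma verbatim.

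The main obstacle is the second paragraph: passing from birepresentability of each individual $\mathfrak{D} _ {\mathtt{y}}(s,-)$ to a single pseudofunctor $\AAAA _ {\mathtt{y}}$ on $\sss^{\op}$ together with an equivalence that is pseudonatural in \emph{both} variables. A direct construction — defining $\AAAA _ {\mathtt{y}}$ on $1$- and $2$-cells by transporting each $\mathfrak{D} _ {\mathtt{y}}(\sigma,-)$ across the local Yoneda equivalence and then verifying the pseudofunctor coherence axioms as well as the pseudonaturality of the resulting equivalence — is possible but would demand a lengthy coherence check. Factoring through the Yoneda biequivalence discharges all of that coherence automatically, at the mild cost of manipulating the (possibly very large) $2$-category $[\aaa,\Cat] _ {PS}$; one controls the size bookkeeping by means of the universe hierarchy $\cat\subset\Cat\subset\CAT$ exactly as set up in Section \ref{Bilimits}.
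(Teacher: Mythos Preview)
Your proof is correct and follows essentially the same route as the paper: both use the equivalence $R^\ast\simeq\aaa(E-,-)$ from the biadjunction together with the factorization hypothesis to see that $\left|\mathfrak{D}_{\mathtt{y}}\right|\simeq\aaa(E\mathfrak{D}'_{\mathtt{y}}-,-)$, then invoke the bicategorical Yoneda lemma to extend $E\mathfrak{D}'_{\mathtt{y}}$ to a pseudofunctor $\AAAA_{\mathtt{y}}$ on all of $\sss^{\op}$ with $\mathfrak{D}_{\mathtt{y}}\simeq\aaa(\AAAA_{\mathtt{y}}-,-)$, and finally appeal to Lemma~\ref{lemmain}. The only difference is one of detail: the paper compresses the Yoneda-extension step into a single sentence, whereas you spell it out via currying and the biequivalence of $\YYYY_{\aaa^{\op}}$ onto its essential image.
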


\begin{proof}
Indeed, if $E: \bbb\to\aaa $ is left biadjoint to $R$, then there is a pseudonatural equivalence $R^\ast\simeq \aaa (E-,-)$. Therefore, by the hypotheses, for each object $Y$ of $\ccc $, there is a pseudofunctor $\mathfrak{D} _ {\mathtt{y}} ': \sss _{{}_{0}} \to \bbb ^{\op }$ such that  
$\left| \mathfrak{D} _ {\mathtt{y}}\right|\simeq R^\ast\circ (\mathfrak{D}  ' _ {\mathtt{y}}\times \Id _ {{}_{\aaa }}) \simeq \aaa(E\mathfrak{D}'_\mathtt{y}-,-)$.
From the bicategorical Yoneda lemma, it follows that we can choose a pseudofunctor $\AAAA _ \mathtt{y}: \sss ^{\op }\to\aaa $ which is an extension of $E\mathfrak{D}'_\mathtt{y}$  such that 
$\aaa (\AAAA _ \mathtt{y} - , -)\simeq 	\mathfrak{D}_\mathtt{y} $. The consequence follows from Lemma \ref{lemmain}.
\end{proof}

\begin{coro}[Biadjoint Triangle]\label{used}
Assume that $\VVVV : \ccc ' \to\ccc  $ is a pseudofunctor and
$$\xymatrix{  \aaa\ar[rr]^-{J}\ar[dr]_-{R}&&\ccc '\ar[dl]^-{U}\\
&\bbb  & }$$
is a commutative triangle of pseudofunctors satisfying the following: for each object $\mathtt{y}$ of $\ccc$, there is a pseudofunctor  $\DDDD_ \mathtt{y} :\sss \times \ccc ' \to \Cat  $  such that
$\left|\DDDD _ \mathtt{y}\right|$ can be factorized through $U^\ast $ and $\left\{ \WWWW , \DDDD _\mathtt{y} (-, \mathtt{x}) \right\} _ {\bi }\simeq \ccc (\mathtt{y}, \VVVV \mathtt{x}) $ for each object $\mathtt{x}$ of $\ccc '$. In this setting, for each object $\mathtt{y}$ of $\ccc $,  there is a pseudofunctor $\AAAA _ \mathtt{y} :\sss ^{\op }\to\aaa  $ such that $\DDDD _ \mathtt{y}(-, J-)\simeq \aaa(\AAAA _ \mathtt{y} -, -) $.

As a consequence, the pseudofunctor $\VVVV \circ J$ is right biadjoint if and only if, for every object $\mathtt{y}$ of $\ccc $, the weighted bicolimit  $\WWWW\Asterisk _ {\bi } \AAAA _\mathtt{y}$ exists in $\aaa $. In this case, $\VVVV \circ J$ is right biadjoint to $G$, defined by $G\mathtt{y} = \WWWW\Asterisk _ {\bi } \AAAA _\mathtt{y} $. 
\end{coro}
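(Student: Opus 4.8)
The plan is to deduce the statement directly from Theorem \ref{almostmain}, applied to the pseudofunctor $\VVVV\circ J:\aaa\to\ccc$ in the role of the pseudofunctor called $J$ there. The only thing I need to produce is, for each object $\mathtt{y}$ of $\ccc$, a pseudofunctor $\mathfrak{D}_{\mathtt{y}}:\sss\times\aaa\to\Cat$ satisfying the two hypotheses of that theorem; the natural candidate is the restriction of $\DDDD_{\mathtt{y}}$ along $J$ in the second variable, namely
$$\mathfrak{D}_{\mathtt{y}}:=\DDDD_{\mathtt{y}}(-,J-)=\DDDD_{\mathtt{y}}\circ(\Id_{\sss}\times J).$$

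First I would verify the weighted-bilimit condition. Evaluating at any object $A$ of $\aaa$ and invoking the hypothesis of the corollary with $\mathtt{x}=JA$ gives
$$\left\{\WWWW,\mathfrak{D}_{\mathtt{y}}(-,A)\right\}_{\bi}=\left\{\WWWW,\DDDD_{\mathtt{y}}(-,JA)\right\}_{\bi}\simeq\ccc(\mathtt{y},\VVVV(JA))=\ccc(\mathtt{y},(\VVVV\circ J)A),$$
which is precisely the second hypothesis of Theorem \ref{almostmain} for the pseudofunctor $\VVVV\circ J$.

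The crux is the factorization condition: I must show that $\left|\mathfrak{D}_{\mathtt{y}}\right|$ factorizes through $R^\ast=\bbb(-,R-)$. By hypothesis $\left|\DDDD_{\mathtt{y}}\right|$ factorizes through $U^\ast=\bbb(-,U-)$, so there is a pseudofunctor $\DDDD'_{\mathtt{y}}:\sss_{0}\to\bbb^{\op}$ with $\left|\DDDD_{\mathtt{y}}\right|\simeq U^\ast\circ(\DDDD'_{\mathtt{y}}\times\Id_{\ccc'})$. Since restriction to $\sss_{0}$ and precomposition with $J$ act in different variables, $\left|\mathfrak{D}_{\mathtt{y}}\right|=\left|\DDDD_{\mathtt{y}}\right|\circ(\Id_{\sss_{0}}\times J)$, and precomposing the displayed equivalence with $\Id_{\sss_{0}}\times J$ yields $\left|\mathfrak{D}_{\mathtt{y}}\right|\simeq U^\ast\circ(\DDDD'_{\mathtt{y}}\times J)$. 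Here I would use the commutativity of the triangle, $U\circ J=R$: evaluating at $(s,A)$ gives $\bbb(\DDDD'_{\mathtt{y}}(s),U(JA))=\bbb(\DDDD'_{\mathtt{y}}(s),RA)$, so that $U^\ast\circ(\Id_{\bbb^{\op}}\times J)=R^\ast$ and hence
$$\left|\mathfrak{D}_{\mathtt{y}}\right|\simeq R^\ast\circ(\DDDD'_{\mathtt{y}}\times\Id_{\aaa}),$$
with the \emph{same} $\DDDD'_{\mathtt{y}}$. This is exactly the factorization demanded by Theorem \ref{almostmain}.

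With both hypotheses in hand, Theorem \ref{almostmain} (applied to $\VVVV\circ J$) produces, for each $\mathtt{y}$, a pseudofunctor $\AAAA_{\mathtt{y}}:\sss^{\op}\to\aaa$ with $\mathfrak{D}_{\mathtt{y}}\simeq\aaa(\AAAA_{\mathtt{y}}-,-)$, that is $\DDDD_{\mathtt{y}}(-,J-)\simeq\aaa(\AAAA_{\mathtt{y}}-,-)$, which is the first assertion; and it gives that $\VVVV\circ J$ is right biadjoint exactly when every $\WWWW\Asterisk_{\bi}\AAAA_{\mathtt{y}}$ exists in $\aaa$, with $G\mathtt{y}=\WWWW\Asterisk_{\bi}\AAAA_{\mathtt{y}}$, which is the consequence. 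I expect the only genuine obstacle to be the bookkeeping in the factorization step: confirming that $U^\ast\circ(\Id_{\bbb^{\op}}\times J)=R^\ast$ holds on the nose thanks to the strict commutativity $U\circ J=R$, and that the composition of the pseudonatural equivalence for $\left|\DDDD_{\mathtt{y}}\right|$ with the (strict) reindexing $\Id_{\sss_{0}}\times J$ is harmless for the subsequent appeal to the theorem. Everything else is a direct transcription of the theorem's conclusion.
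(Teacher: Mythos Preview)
Your proposal is correct and follows essentially the same approach as the paper: define $\mathfrak{D}_{\mathtt{y}}:=\DDDD_{\mathtt{y}}(-,J-)$, verify the bilimit hypothesis by specializing $\mathtt{x}=JA$, and check the factorization through $R^\ast$ via the chain $\left|\mathfrak{D}_{\mathtt{y}}\right|\simeq U^\ast\circ(\DDDD'_{\mathtt{y}}\times J)=R^\ast\circ(\DDDD'_{\mathtt{y}}\times\Id_{\aaa})$ using $UJ=R$. The paper's proof is identical in substance, with your version merely spelling out the bookkeeping (commutation of the two restrictions, and the equality $U^\ast\circ(\Id_{\bbb^{\op}}\times J)=R^\ast$) a bit more explicitly.
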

\begin{proof}
We prove that $\mathfrak{D} _ {\mathtt{y}} := \DDDD _ \mathtt{y} (-,J-)$ satisfies the hypotheses of Theorem \ref{almostmain}. 
We have that, for each object $\mathtt{y}$ of $\ccc $ and each object $A$ of $\aaa $, 
 $$\left\{ \WWWW , \mathfrak{D}_\mathtt{y}(-,A)\right\} _ {\bi }\simeq \ccc (\mathtt{y}, \VVVV JA).$$ 

Also, for each object $\mathtt{y}$ of $\ccc $, there is a pseudofunctor $\DDDD _ \mathtt{y}': \sss _ {{}_{0}}\to\bbb ^{\op }$ such that  
$ U^\ast \circ (\DDDD _ \mathtt{y}'\times \Id _ {\ccc })\simeq \left| \DDDD _\mathtt{y}\right| $.
Therefore 
$$R^\ast \circ  (\DDDD _ \mathtt{y}'\times \Id _ {\aaa })\simeq U^\ast \circ (\DDDD _ \mathtt{y}'\times J )\simeq \left| \DDDD _\mathtt{y}\right|\circ (\Id _ {\sss _{{}_{0} }}\times J )\simeq \left| \mathfrak{D}_\mathtt{y}\right| .$$
\end{proof}

Corollary 5.10 of \cite{Lucatelli1} is a direct consequence of the last corollary and Proposition 5.7 of \cite{Lucatelli1}. In particular, if $\TTTTT $ is a pseudomonad on $\bbb $ and $U:\mathsf{Ps}\textrm{-}\TTTTT\textrm{-}\Alg\to\bbb $ is the forgetful $2$-functor, Proposition 5.5 of \cite{Lucatelli1} shows that the category of pseudomorphisms between two pseudoalgebras is given by a descent object (which is a type of weighted bilimit) of a diagram satisfying the hypotheses of Corollary \ref{used}. Therefore, assuming the existence of codescent objects in $\aaa $, $J$ has a left biadjoint.

In Section \ref{Laxalgebras}, we define the $2$-category of lax algebras of a pseudomonad $\TTTTT $. There, we also show Proposition \ref{laxeffectivedescent} which is precisely the analogue and a generalization of Proposition 5.5 of \cite{Lucatelli1}: the category of lax morphisms and the category of pseudomorphisms between lax algebras are given by appropriate types of weighted bilimits. Then, we can apply Corollary \ref{used} to get our desired result on lifting of biadjoints to the $2$-category of lax algebras: Theorem \ref{GOAL}. Next section, we define and study the weighted bilimits  appropriate to our problem, called lax descent objects and descent objects.

To finish this section, we get a trivial consequence of Corollary \ref{used}:

\begin{coro}\label{rt}
If $RJ=U$ are pseudofunctors in which $R$ is right biadjoint and $U$ is locally an equivalence, then $J$ is right biadjoint as well. Actually, if $E$ is left biadjoint to $R$, $G\mathtt{y} : = EU\mathtt{y} $ defines the pseudofunctor left biadjoint to $J$.
\end{coro}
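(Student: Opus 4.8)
The plan is to exhibit a pseudonatural equivalence $\aaa(EU\mathtt{y}, A) \simeq \ccc(\mathtt{y}, JA)$ in the two variables $\mathtt{y}\in\ccc$ and $A\in\aaa$, and then to conclude exactly as in the proof of Lemma \ref{lemmain}: such an equivalence exhibits $G\mathtt{y} := EU\mathtt{y}$ as a birepresentation of $\ccc(\mathtt{y}, J-)$, whence $\mathtt{y}\mapsto G\mathtt{y}$ underlies a pseudofunctor left biadjoint to $J$. First I would build the equivalence as a pasting of three. Writing $\chi : \bbb(-, R-) \simeq \aaa(E-, -)$ for the pseudonatural equivalence associated to $E\dashv R$ (Remark \ref{addd}) and evaluating its first variable at $U\mathtt{y}$ gives $\aaa(EU\mathtt{y}, A) \simeq \bbb(U\mathtt{y}, RA)$. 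Since the triangle commutes on the nose, $R = UJ$, so $RA = UJA$ and $\bbb(U\mathtt{y}, RA) = \bbb(U\mathtt{y}, UJA)$. Finally, because $U$ is locally an equivalence its action on hom-categories is a pseudonatural equivalence $\ccc(\mathtt{y}, \mathtt{y}') \simeq \bbb(U\mathtt{y}, U\mathtt{y}')$; taking $\mathtt{y}' = JA$ yields $\bbb(U\mathtt{y}, UJA) \simeq \ccc(\mathtt{y}, JA)$. Composing the three steps gives the required equivalence, with the explicit value $G\mathtt{y} = EU\mathtt{y}$.

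The only thing to check is that this composite is pseudonatural jointly in $\mathtt{y}$ and $A$, not merely an objectwise equivalence; this is exactly what promotes the pointwise birepresentations to an honest biadjunction in the sense of Definition \ref{ad}. Each factor is already pseudonatural in both arguments --- $\chi$ by Remark \ref{addd}, the middle identification strictly so, and the last one because it is induced by the pseudofunctors $U$ and $J$ on hom-categories, contravariantly in $\mathtt{y}$ and covariantly in $A$ --- so their pasting is again pseudonatural. I do not expect a genuine obstacle here: the whole argument is a bookkeeping of coherences already packaged in $\chi$ and in the local equivalence $U$, which is precisely why the statement is only a \emph{trivial} consequence of the preceding results.

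Alternatively, and to match the framing as a corollary of Corollary \ref{used}, I would present it as the degenerate instance in which $\VVVV = \Id_\ccc$, $\sss$ is the terminal $2$-category and $\WWWW$ is the terminal weight. Setting $\DDDD_\mathtt{y}(-, \mathtt{x}) := \bbb(U\mathtt{y}, U\mathtt{x})$, the hypothesis that $\left|\DDDD_\mathtt{y}\right|$ factors through $U^\ast$ holds by construction (with $\DDDD_\mathtt{y}'$ picking out $U\mathtt{y}$), while $\left\{\WWWW, \DDDD_\mathtt{y}(-, \mathtt{x})\right\}_\bi \simeq \ccc(\mathtt{y}, \mathtt{x})$ is exactly the assertion that $U$ is locally an equivalence. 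With $\sss$ and $\WWWW$ trivial every weighted bicolimit $\WWWW\Asterisk_\bi \AAAA_\mathtt{y}$ collapses to $\AAAA_\mathtt{y}$ evaluated at the unique object, hence always exists; and the construction in the proof of Theorem \ref{almostmain} identifies $\AAAA_\mathtt{y}$ with $E\DDDD_\mathtt{y}'$, recovering $G\mathtt{y} = EU\mathtt{y}$.
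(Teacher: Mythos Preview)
Your proposal is correct. The paper gives no proof beyond calling the result a ``trivial consequence of Corollary~\ref{used}'', and your second paragraph spells out precisely that degenerate instance (terminal $\sss$, terminal weight $\WWWW$, $\VVVV=\Id_\ccc$, $\DDDD_\mathtt{y}'(\ast)=U\mathtt{y}$); your first paragraph is the same argument unpacked directly via the chain $\aaa(EU\mathtt{y},A)\simeq\bbb(U\mathtt{y},RA)=\bbb(U\mathtt{y},UJA)\simeq\ccc(\mathtt{y},JA)$.
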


\section{Lax descent objects}\label{Descent}
In this section we describe the $2$-categorical limits called lax descent objects and descent objects~\cite{RS76, RS80, RS87, RS, JanelidzeTholen, SLACK2020,  Lucatelli2}.

In page 177 of \cite{RS76}, without establishing the name ``lax descent objects'', it is shown that 
given a $2$-monad $\TTTTT $, for each pair $\mathtt{y}, \mathtt{z} $ of strict $\TTTTT $-algebras, there is a diagram
of categories for which its lax descent category (object) is the category of lax morphisms between $\mathtt{y} $ and $\mathtt{z} $.
We establish a generalization of this result for lax algebras: Proposition \ref{laxeffectivedescent}. 

In order to establish such result, our approach in defining the lax descent objects is different from \cite{RS76}, commencing with the definition of our ``domain $2$-category'', denoted by $\Delta _ {\ell } $.   
 
\begin{defi}[$\t : \Delta _\ell\to \dot{\Delta } _ \ell $ and $\j : \Delta _\ell\to\dot{\Delta }$]\label{delta}
We denote by $\dot{\mathfrak{\Delta } }_\ell $ the computad defined by the diagram
$$\xymatrix{  \mathsf{0} \ar[rr]^-d && \mathsf{1}\ar@<1.7 ex>[rrr]^-{d^0 }\ar@<-1.7ex>[rrr]_-{d^1 } &&& \mathsf{2}\ar[lll]|-{s^0}
\ar@<1.7 ex>[rrr]^{\partial ^0 }\ar[rrr]|-{\partial ^1}\ar@<-1.7ex>[rrr]_{\partial ^2} &&& \mathsf{3} }$$ 
with the $2$-cells:
\begin{eqnarray*}
\begin{aligned}
\sigma_{00} &:&  \partial^{0}d^{0}\Rightarrow \partial ^1 d ^0, \\
\sigma_{20} &:&  \partial ^2 d ^0\Rightarrow\partial^{0}d^{1}, \\
\sigma_{21} &:&  \partial ^2 d ^1\Rightarrow\partial^{1}d^{1},
\end{aligned}
\qquad
\begin{aligned}
 n_0        &:&  \id _{{}_\mathsf{1}}\Rightarrow s^0d^0 ,  \\
 n_1        &:&   \id _{{}_\mathsf{1}}\Rightarrow s^{0}d^{1}, \\
 \vartheta        &:&  d^1d\Rightarrow d^0d .
\end{aligned}
\end{eqnarray*}

The \textit{$2$-category $\dot{\Delta } _ \ell $} is, herein, the locally preordered $2$-category freely generated by $\dot{\mathfrak{\Delta } }_\ell $. The full sub-$2$-category of $\dot{\Delta } _ \ell $ with objects $\mathsf{1}, \mathsf{2}, \mathsf{3}$ is \textit{denoted by $\Delta _ \ell $} and the full inclusion by  $\t: \Delta _ \ell\to\dot{\Delta }_\ell $.

We consider also the computad $\dot{\mathfrak{\Delta } }$ which is defined as the computad $\dot{\mathfrak{\Delta } }_\ell $ with one extra $2$-cell $d^0d \Rightarrow d^1d $. We denote by $\dot{\Delta }$ the locally preordered $2$-category freely generated by $\dot{\mathfrak{\Delta } }$. Of course, there is also a full inclusion $\j : \Delta _\ell\to\dot{\Delta }$.
\end{defi}

We define, also, the computad $\mathfrak{\Delta } _ \ell $ which is the full subcomputad of $\dot{\mathfrak{\Delta } }_\ell $ with objects $\mathsf{1}, \mathsf{2}, \mathsf{3}$. 

\begin{prop}\label{Presentation}
Let $\aaa $ be a $2$-category. There is a bijection between the $2$-functors $ \Delta _ \ell\to \aaa $ and the maps of computads $\mathfrak{\Delta } _ \ell\to \uuu\aaa $. In other words, $\Delta _ \ell $ is the $2$-category freely generated by the computad $\mathfrak{\Delta } _ \ell $.

Also, there is a bijection between $2$-functors $\overline{\DDDD } : \dot{\Delta } _ \ell\to \aaa $ and the maps of computads $\DDDD   : \mathfrak{\Delta } _ \ell\to \uuu\aaa $ which satisfy the following equations:
\begin{itemize}
\renewcommand\labelitemi{--}
\item Associativity:
$$\xymatrix{  
\DDDD\mathsf{0}\ar[r]^{\DDDD (d)}\ar[dd]_{\DDDD (d)}\ar@{}[rdd]|-{\xRightarrow {\DDDD (\vartheta )} } 
&
\DDDD \mathsf{1}\ar[dd]^{\DDDD (d^0) }\ar@{}[rrddd]|{=}
&
&
\DDDD \mathsf{3}
&
&
\DDDD \mathsf{2}\ar[ll]^{\DDDD(\partial ^1)}\ar@{}[ld]|-{\xRightarrow{\DDDD(\sigma_{00} )}}
\\
&
&
&
&
\DDDD \mathsf{2}\ar@{}[l]|-{\xRightarrow{\DDDD(\sigma_{20} )} }\ar[lu]|-{\DDDD(\partial ^0)}
&
\DDDD \mathsf{1}\ar[l]_{\DDDD (d^0)}\ar[u]_-{\DDDD (d^0)}
\\
\DDDD \mathsf{1} \ar[r]^{\DDDD (d^1) }\ar[d]_{\DDDD (d^1)}\ar@{}[rd]|-{\xRightarrow {\DDDD(\sigma_{21} ) } }
&
\DDDD \mathsf{2}\ar[d]^{\DDDD(\partial ^1)} 
&
&
\DDDD \mathsf{2}\ar[uu]^{\DDDD(\partial ^2 )}\ar@{}[rd]|-{\xRightarrow {\DDDD (\vartheta )}}
&
\DDDD \mathsf{1}\ar[l]_ {\DDDD (d^0 ) }\ar@{}[r]|-{\xRightarrow{\DDDD (\vartheta )} }\ar[u]_{\DDDD (d^1)}
&
\\
\DDDD \mathsf{2}\ar[r]_{\DDDD(\partial ^2 )}
&
\DDDD \mathsf{3}
&
&
\DDDD\mathsf{1}\ar[u]^-{\DDDD (d^1)}
&
\DDDD\mathsf{0}\ar[l]^-{\DDDD (d)}\ar[u]_-{\DDDD (d)}\ar@/_6ex/[ruu]|-{\DDDD (d)}
&
}$$
\item Identity:
$$\xymatrix{
\DDDD\mathsf{0}\ar[r]^{\DDDD(d) }\ar[d]_{\DDDD(d)}\ar@{}[dr]|-{\xRightarrow{\DDDD(\vartheta ) } }
&
\DDDD\mathsf{1}\ar[d]^{\DDDD(d^0)}\ar@{}[ddrr]|-{=}
&
&
\DDDD\mathsf{0}\ar[d]_{\DDDD(d) } 
&
\\
\DDDD\mathsf{1}\ar[r]_{\DDDD(d^1 ) }\ar@{ { } }@/_1ex/[rd]|-{\xRightarrow{\DDDD(n_1)} }\ar@{=}@/_5ex/[rd]
&
\DDDD \mathsf{2} \ar[d]^{\DDDD (s^0)}
&
&
\DDDD\mathsf{1}\ar[r]_{\DDDD(d^0 ) }\ar@{ { } }@/_1ex/[rd]|-{\xRightarrow{\DDDD(n_0)} }\ar@{=}@/_5ex/[rd]
&
\DDDD\mathsf{2}\ar[d]^{\DDDD (s^0)}
\\
&
\DDDD\mathsf{1}
&
&
&
\DDDD\mathsf{1}
}$$ 
\end{itemize}
Moreover, there is a bijection between $2$-functors $\dot{\Delta }\to\aaa $ and $2$-functors $\overline{\DDDD } : \dot{\Delta } _ \ell\to \aaa $ such that $\overline{\DDDD } ( \vartheta ) $ is an invertible $2$-cell. 
\end{prop}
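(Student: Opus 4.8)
The plan is to deduce all three bijections from two ingredients already available: the universal property of the free $2$-category $\mathcal{L}\mathfrak{c}$ on a computad $\mathfrak{c}$ (so that $2$-functors $\mathcal{L}\mathfrak{c}\to\aaa$ are the same as computad maps $\mathfrak{c}\to\uuu\aaa$), and the explicit description of the left $2$-adjoint $\Cat\textrm{-}\CAT\to\mathsf{Preord}\textrm{-}\CAT$, which fixes objects and $1$-cells and replaces each hom-category by its preorder reflection. Writing $P$ for this reflection and $q:\mathcal{L}\mathfrak{c}\to P\mathcal{L}\mathfrak{c}$ for the canonical comparison, the single fact I would use repeatedly is: a $2$-functor $P\mathcal{L}\mathfrak{c}\to\aaa$ is exactly a computad map $\mathfrak{c}\to\uuu\aaa$ whose induced $2$-functor sends any two parallel $2$-cells of $\mathcal{L}\mathfrak{c}$ to the \emph{same} $2$-cell of $\aaa$. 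Each bijection then amounts to locating the parallel composite $2$-cells in the relevant free $2$-category and checking which of the displayed equations force their agreement.

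For the first bijection I would observe that $P$ acts hom-category-wise and so commutes with passing to a full sub-$2$-category; hence $\Delta_\ell$ is $P$ applied to the full sub-$2$-category of $\mathcal{L}\dot{\mathfrak{\Delta}}_\ell$ on $\mathsf{1},\mathsf{2},\mathsf{3}$. As $\mathsf{0}$ receives no generating arrow, no path between objects of $\{\mathsf{1},\mathsf{2},\mathsf{3}\}$ meets $\mathsf{0}$, and the generators $d,\vartheta$ produce only cells whose boundaries begin at $\mathsf{0}$; thus that full sub-$2$-category is precisely $\mathcal{L}\mathfrak{\Delta}_\ell$. Consequently ``$\Delta_\ell=\mathcal{L}\mathfrak{\Delta}_\ell$'' is equivalent to the statement that $\mathcal{L}\mathfrak{\Delta}_\ell$ is already locally preordered, after which every computad map $\mathfrak{\Delta}_\ell\to\uuu\aaa$ vacuously satisfies the ``parallel $2$-cells agree'' condition and extends uniquely along $q$. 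Proving that local preorderedness is the main obstacle: I would either set up a terminating, confluent rewriting system on the words in $d^0,d^1,s^0,\partial^0,\partial^1,\partial^2$ (orienting $\sigma_{00},\sigma_{20},\sigma_{21},n_0,n_1$ as rules and checking the finitely many critical pairs), or, more conceptually, identify $\dot{\Delta}_\ell$ with the poset-enriched truncated augmented simplex category under the pointwise order, where hom-categories are posets by construction and the generators correspond to the coface/codegeneracy operators and the cosimplicial inequalities.

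For the second bijection, restriction along $\t:\Delta_\ell\to\dot{\Delta}_\ell$ presents a $2$-functor $\overline{\DDDD}:\dot{\Delta}_\ell\to\aaa$ as its restriction to $\Delta_\ell$ (a computad map $\mathfrak{\Delta}_\ell\to\uuu\aaa$, by the first part) together with the images of $\mathsf{0}$, $d$ and $\vartheta$ -- that is, the data of a computad map $\dot{\mathfrak{\Delta}}_\ell\to\uuu\aaa$ -- and this defines a $2$-functor exactly when every parallel composite $2$-cell of $\mathcal{L}\dot{\mathfrak{\Delta}}_\ell$ gets equal images. Analysing the directed graph of the whiskered generators $\sigma_{00}d,\sigma_{20}d,\sigma_{21}d,\partial^0\vartheta,\partial^1\vartheta,\partial^2\vartheta$ on the six $1$-cells $\partial^i d^j d$, and of $n_0d,n_1d,s^0\vartheta$ on the $1$-cells $d,s^0 d^0 d,s^0 d^1 d$, I would show there are exactly two atomic coincidences: the two routes $\partial^2 d^1 d\Rightarrow\partial^1 d^0 d$, which is the associativity equation, and the two routes $d\Rightarrow s^0 d^0 d$, which is the identity equation. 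The remaining step is to argue that these two equations, together with the (now vacuous) coincidences inside $\mathcal{L}\mathfrak{\Delta}_\ell$, generate all parallel coincidences under whiskering and pasting -- again cleanest via the rewriting system -- so that they are necessary and sufficient.

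Finally, $\dot{\mathfrak{\Delta}}$ adds to $\dot{\mathfrak{\Delta}}_\ell$ only one generating $2$-cell $\vartheta':d^0 d\Rightarrow d^1 d$ and no new $1$-cells, so $\dot{\Delta}$ and $\dot{\Delta}_\ell$ share objects and $1$-cells. Since $\dot{\Delta}$ is locally preordered, thinness forces $\vartheta'\vartheta=\id_{d^1 d}$ and $\vartheta\vartheta'=\id_{d^0 d}$, so $\vartheta$ is invertible in $\dot{\Delta}$ with inverse $\vartheta'$; hence precomposition with the evident $2$-functor $\dot{\Delta}_\ell\to\dot{\Delta}$ sends a $2$-functor $\dot{\Delta}\to\aaa$ to some $\overline{\DDDD}$ with $\overline{\DDDD}(\vartheta)$ invertible. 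Conversely, given such a $\overline{\DDDD}$, I would set the image of $\vartheta'$ to be $\overline{\DDDD}(\vartheta)^{-1}$ and verify that the only new parallel coincidences introduced by $\vartheta'$ follow from this invertibility together with the associativity and identity equations already in force; since the image of $\vartheta'$ is then forced, the extension is unique. Across all three parts the one genuinely hard point is this combinatorial bookkeeping of parallel $2$-cells -- establishing local preorderedness of $\mathcal{L}\mathfrak{\Delta}_\ell$ and the completeness of the two displayed equations -- which a confluent rewriting system, or the concrete simplicial model, reduces to a finite check.
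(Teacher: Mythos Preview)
The paper gives no proof of this proposition: after the statement it passes directly to the definition of (lax) descent objects, so there is nothing to compare against. Your proposal is therefore an attempt to supply an argument the author left implicit.

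Your outline is sound. The reduction to the universal property of $\mathcal{L}$ and of the preorder reflection $P$ is the right framework, and the two auxiliary observations you use are correct: $P$ acts hom-wise and hence commutes with taking full sub-$2$-categories, and the full sub-$2$-category of $\mathcal{L}\dot{\mathfrak{\Delta}}_\ell$ on $\{\mathsf{1},\mathsf{2},\mathsf{3}\}$ is indeed $\mathcal{L}\mathfrak{\Delta}_\ell$, since $\mathsf{0}$ is a source in the underlying graph and $\vartheta$ lives over $\mathsf{0}$. Your analyses of the second and third bijections are also correct in shape: the extra data over $\mathsf{0}$ contribute exactly the two displayed pasting equalities, and the passage from $\dot{\Delta}_\ell$ to $\dot{\Delta}$ only adjoins a formal inverse to $\vartheta$ in a thin setting.

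What remains is precisely what you flag as the hard point: verifying that $\mathcal{L}\mathfrak{\Delta}_\ell$ is already locally preordered, and that the associativity and identity equations exhaust the new parallel coincidences created by $\vartheta$. You sketch two routes (a confluent rewriting system, or identification with the order-enriched truncated augmented simplex category) but do not carry either out. The simplicial model is the cleaner of the two and would close the gap quickly: map the generators to the standard cofaces and codegeneracies among $[0],[1],[2],[3]$ with the pointwise order on monotone maps, check that the five generating inequalities $\sigma_{ij},n_k$ hold there, and use that hom-posets of $\Delta$ are genuinely posets to conclude thinness; faithfulness of this comparison then gives local preorderedness of $\mathcal{L}\mathfrak{\Delta}_\ell$. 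Until one of these verifications is written down, the argument is a plan rather than a proof---which, to be fair, matches the level of detail in the paper itself.
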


Let $\aaa $ be a $2$-category and $\DDDD : \Delta _ \ell \to \aaa $ be a pseudofunctor. If the weighted bilimit $\left\{ \dot{\Delta } (\mathsf{0}, \j -), \DDDD\right\} _ {\bi } $ exists, we say that  $\left\{ \dot{\Delta } (\mathsf{0}, \j -), \DDDD\right\} _ {\bi } $ is the \textit{descent object} of $\DDDD $. 
Moreover, if the weighted bilimit $\left\{ \dot{\Delta }_\ell (\mathsf{0}, \t -), \DDDD\right\} _ {\bi } $ exists, it is called the \textit{lax descent object} of $\DDDD $.

Analogously, if such $\DDDD$ is a $2$-functor and the (strict) weighted $2$-limit $\left\{ \dot{\Delta } (\mathsf{0}, \j -), \DDDD\right\} $ exists, we call it the \textit{strict descent object} of $\DDDD $. Finally, the (strict) weighted $2$-limit $\left\{ \dot{\Delta }_ \ell (\mathsf{0}, \t -), \DDDD\right\} $ is called the \textit{strict lax descent object} of $\DDDD $, if it exists.

\begin{lem}\label{strictequivalence}
Strict lax descent objects are lax descent objects and strict descent objects are descent objects. That is to say, the weights $\dot{\Delta } _ \ell (\mathsf{0}, \t -): \Delta _ \ell\to\Cat, \dot{\Delta }  (\mathsf{0}, \j -): \Delta _ \ell\to\Cat $ are flexible.
\end{lem}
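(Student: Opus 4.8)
The plan is to reduce both assertions to the single claim that the two weights $\dot{\Delta}_\ell(\mathsf{0},\t-)$ and $\dot{\Delta}(\mathsf{0},\j-)$ are flexible, and then to verify flexibility by exhibiting each weight as a flexible colimit of representables. For the reduction, recall from Section~\ref{Bilimits} that for a flexible weight $\WWWW$ the strict weighted limit $\{\WWWW,\DDDD\}$ and the bilimit $\{\WWWW,\DDDD\}_{\bi}$ are equivalent whenever they exist (see \cite{Flexible}); applying this to $\WWWW=\dot{\Delta}_\ell(\mathsf{0},\t-)$ and to $\WWWW=\dot{\Delta}(\mathsf{0},\j-)$ shows at once that strict lax descent objects are lax descent objects and strict descent objects are descent objects. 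So it only remains to prove flexibility, and here I would use the characterization of \cite{Flexible}: a weight is flexible exactly when it belongs to the closure of the representables under the flexible colimits --- coproducts, coinserters, coequifiers and splittings of idempotents.

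First I would treat the lax descent weight. Proposition~\ref{Presentation} describes $\dot{\Delta}_\ell$ through its computad, and this presentation translates directly into a construction of the restricted representable $\dot{\Delta}_\ell(\mathsf{0},\t-)\colon\Delta_\ell\to\Cat$ out of the representables $\Delta_\ell(\mathsf{n},-)$. Since the only generating $1$-cell out of $\mathsf{0}$ is $d\colon\mathsf{0}\to\mathsf{1}$, every $1$-cell $\mathsf{0}\to\mathsf{n}$ factors as a $1$-cell $\mathsf{1}\to\mathsf{n}$ composed with $d$; hence the weight has the same objects as $\Delta_\ell(\mathsf{1},-)$, and I would start the construction from this representable. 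The generating $2$-cell $\vartheta\colon d^1d\Rightarrow d^0d$ is then adjoined by the coinserter of the parallel pair $(d^1)^{\ast},(d^0)^{\ast}\colon\Delta_\ell(\mathsf{2},-)\rightrightarrows\Delta_\ell(\mathsf{1},-)$, and the associativity and identity equations listed in Proposition~\ref{Presentation} (which relate whiskerings of $\vartheta$ with $\sigma_{00},\sigma_{20},\sigma_{21},n_0,n_1$) are imposed by finitely many coequifiers. As each step is one of the flexible colimits of \cite{Flexible}, the weight $\dot{\Delta}_\ell(\mathsf{0},\t-)$ is flexible.

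For the descent weight the construction is identical except for one point: by the last part of Proposition~\ref{Presentation}, $\dot{\Delta}$ is obtained from $\dot{\Delta}_\ell$ by forcing $\vartheta$ to be invertible, so in place of the plain coinserter adjoining $\vartheta$ I would use the corresponding iso-coinserter (the colimit freely adjoining an invertible $2$-cell), keeping the same coequifiers. This is exactly the step I expect to be the main obstacle: plain coinserters and coequifiers visibly keep us inside the flexible weights, giving the lax case immediately, whereas the iso-coinserter is not itself one of the basic flexible operations and must be recognised as flexible. I would settle this by recalling that iso-coinserters are built from coproducts, coinserters, coequifiers and idempotent splittings --- equivalently, are retracts of the weights for PIE-limits --- and are therefore flexible by \cite{Flexible}. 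Hence $\dot{\Delta}(\mathsf{0},\j-)$ is flexible as well, which completes the argument.
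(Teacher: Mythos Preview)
Your argument is correct and is the standard route to this result. The paper itself does not supply a proof of Lemma~\ref{strictequivalence}: it is stated without justification, presumably as folklore in the spirit of \cite{Flexible} and \cite{RS87}. Your reduction to flexibility, and your construction of each weight as an iterated flexible colimit of representables --- starting from $\Delta_\ell(\mathsf{1},-)$, taking a coinserter (respectively iso-coinserter) of $(d^1)^\ast,(d^0)^\ast$ to adjoin $\vartheta$, and then coequifiers to impose the associativity and identity equations of Proposition~\ref{Presentation} --- is exactly the expected argument, and it matches the explicit description of the strict lax descent category given in Remark~\ref{strictdescentstrict}. Your treatment of the one genuinely non-trivial step, namely that iso-coinserters are PIE (hence flexible) because one may build them from two coinserters followed by two coequifiers forcing the adjoined $2$-cells to be mutually inverse, is also correct. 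In short, your proposal fills in what the paper leaves implicit, and there is nothing to compare it against.
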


The dual notions of lax descent object and descent object are called the codescent object and the lax codescent object. If $\AAAA : \Delta _ \ell ^{\op } \to \aaa $ is a $2$-functor, the \textit{codescent object} of $\AAAA $ is, if it exists, $\dot{\Delta } (\mathsf{0}, \j -)\Asterisk _ {\bi }\AAAA $ and the \textit{lax codescent object} of $\AAAA $ is $\dot{\Delta }_\ell (\mathsf{0}, \t -)\Asterisk _ {\bi }\AAAA $ if it exists.

Also, the weighted colimits $\dot{\Delta } (\mathsf{0}, \j -)\Asterisk \AAAA , \dot{\Delta } _\ell (\mathsf{0}, \t -)\Asterisk \AAAA $ are called, respectively, the \textit{strict codescent object }and the \textit{strict lax codescent object} of $\AAAA $.

\begin{rem}\label{strictdescentstrict}
If 
$\DDDD : \Delta _ {\ell }\to\Cat $
is a $2$-functor, then
$$\left\{ \dot{\Delta }_\ell (\mathsf{0}, \t -), \DDDD \right\} \cong \left[ \Delta _\ell , \Cat\right] \left( \dot{\Delta } _ \ell (\mathsf{0}, \t -), \DDDD \right) .$$
Thereby, we can describe the strict lax descent object of $\DDDD :\Delta _ \ell\to\Cat $ explicitly as follows:

\begin{enumerate}
\item Objects are $2$-natural transformations $\mathtt{f}: \dot{\Delta } _ \ell (\mathsf{0}, \t -)\longrightarrow \DDDD $. We have a bijective correspondence between such $2$-natural transformations and pairs $(f,\left\langle  \overline{\mathsf{f}}\right\rangle)$ in which $f$ is an object of $ \DDDD\mathsf{1} $ and $\left\langle  \overline{\mathsf{f}}\right\rangle: \DDDD (d^1)f\to\DDDD (d^0)f $ is a morphism in $ \DDDD\mathsf{2} $ satisfying the following equations:	
\begin{itemize}
\renewcommand\labelitemi{--}
\item Associativity:
\small
$$\left(\DDDD (\sigma _ {{}_{00}} )_ {{}_f}\right)\left(\DDDD (\partial ^0)(\left\langle  \overline{\mathsf{f}}\right\rangle)\right) \left( \DDDD (\sigma _ {{}_{20}}) _ {{}_{f}}\right)\left(\DDDD (\partial ^2)(\left\langle  \overline{\mathsf{f}}\right\rangle )\right) = \left(\DDDD(\partial ^1)(\left\langle  \overline{\mathsf{f}}\right\rangle)\right)\left(\DDDD (\sigma _ {{}_{21}}) _ {{}_{f}}\right);   $$
\normalsize
\item Identity:
\small
$$\left(\DDDD(s^0) (\left\langle  \overline{\mathsf{f}}\right\rangle) \right)\left(\DDDD(n_1) _ {{}_f}\right) = \left(\DDDD(n_0) _ {{}_f}\right) $$
\normalsize
\end{itemize}
If $\mathtt{f}: \dot{\Delta } _ \ell (\mathsf{0}, -)\longrightarrow \DDDD $ is a $2$-natural transformation, we get such pair by the correspondence 
$\mathtt{f}\mapsto (\mathtt{f} _ {{}_{\mathsf{1} }}(d), \mathtt{f} _ {{}_{\mathsf{2} }}(\vartheta )) $.

\item The morphisms are modifications. In other words, a morphism $\mathfrak{m} : \mathtt{f}\to\mathtt{h} $ is determined by a morphism $\mathfrak{m}: f\to g $ in $\DDDD \mathsf{1} $ such that $\DDDD (d^0)(\mathfrak{m} )\left\langle   \overline{\mathsf{f}}\right\rangle =  \left\langle \overline{\mathsf{h}}\right\rangle\DDDD (d^1)(\mathfrak{m} ) $.
\end{enumerate}

Furthermore, there is a full inclusion 
$\left\{ \dot{\Delta } (\mathsf{0}, \j -), \DDDD \right\} \to\left\{ \dot{\Delta }_\ell (\mathsf{0}, \t -), \DDDD \right\}   $
such that the objects of  $\left\{ \dot{\Delta } (\mathsf{0}, \j -), \DDDD \right\}$ are precisely the pairs $(f,\left\langle  \overline{\mathsf{f}}\right\rangle)$ (described above) with one further property:  $\left\langle\overline{\mathsf{f}}\right\rangle $ is actually an isomorphism in  $ \DDDD\mathsf{2} $.
\end{rem}

\section{Pseudomonads and lax algebras}\label{Laxalgebras}
Pseudomonads in $2$-$\Cat $ are defined in \cite{Lucatelli1, Lucatelli2}. The definition agrees with the theory of pseudomonads for $\mathsf{Gray}$-categories~\cite{MARMOLEJOD, SLACK2000, MARMOLEJO, MARMOLEJOR} and with the definition of \textit{doctrines} of \cite{RS80}.

For each pseudomonad $\TTTTT$ on a $2$-category $\bbb $, there is an associated (right biadjoint) forgetful $2$-functor 
$\mathsf{Ps}\textrm{-}\TTTTT\textrm{-}\Alg\to \bbb $, 
in which $\mathsf{Ps}$-$\TTTTT\textrm{-}\Alg $ is the $2$-category of pseudoalgebras. In this section, we give the definitions of the $2$-category of lax algebras $\mathsf{Lax}\textrm{-}\TTTTT\textrm{-}\Alg _\ell $ and its associated forgetful $2$-functor $ \mathsf{Lax}\textrm{-}\TTTTT\textrm{-}\Alg _\ell\to\bbb $, which are slight generalizations of the definitions given in \cite{RSY, SLACK2020}.

Recall that a \textit{pseudomonad} $\TTTTT$ on a $2$-category $\bbb $ consists of a sextuple $(\TTTTT,  m  , \eta , \mu,   \iota, \tau )$, in which $\TTTTT:\bbb\to\bbb $ is a pseudofunctor, $ m  : \TTTTT^2\longrightarrow \TTTTT,  \eta : \Id _ {{}_\bbb }\longrightarrow \TTTTT$ are pseudonatural transformations and
$\tau : \Id _ {{}_{\TTTTT}}\Longrightarrow (m)(\TTTTT\eta ) $, $\iota : (m)(\eta\TTTTT )\Longrightarrow \Id _ {{}_{\TTTTT}}$, $\mu : m\left(\TTTTT m\right)\Rightarrow m\left( m\TTTTT \right)  $
are invertible modifications satisfying the following coherence equations:
\begin{itemize}
\renewcommand\labelitemi{--}
\item Associativity:
$$\xymatrix{ \TTTTT^4\ar[r]^-{\TTTTT^2  m  }\ar[dr]|-{\TTTTT m \TTTTT}\ar[d]_{ m \TTTTT^2}
&
\TTTTT^3\ar[dr]^-{\TTTTT m }\ar@{}[d]|-{\xLeftarrow{\widehat{\TTTTT\mu} }}
&
&&
\TTTTT^4\ar[r]^-{\TTTTT^2 m }\ar@{}[dr]|-{\xLeftarrow{ m  _ {{}_{{}_{ m  } }}^{-1}}}\ar[d]_-{ m   \TTTTT^2}
&
\TTTTT^3\ar[d]|-{ m  \TTTTT}\ar[dr]^-{\TTTTT m  }
&
\\
\TTTTT^3\ar[dr]_-{ m \TTTTT}\ar@{}[r]|{\xLeftarrow{\mu \TTTTT}}
&
\TTTTT^3\ar[r]|-{\TTTTT m  }\ar[d]|-{ m \TTTTT}\ar@{}[dr]|-{ \xLeftarrow{\hskip 0.1cm\mu \hskip 0.1cm} }
&
\TTTTT^2\ar[d]^-{ m  }
&=&
\TTTTT^3\ar[r]|-{\TTTTT m  }\ar[dr]_-{ m  \TTTTT}
&
\TTTTT^2\ar@{}[r]|-{\xLeftarrow{\hskip 0.1cm \mu\hskip 0.1cm }}\ar[dr]|-{m}\ar@{}[d]|-{\xLeftarrow{\hskip 0.1cm\mu\hskip 0.1cm } }
&
\TTTTT^2\ar[d]^{ m  }
\\
&
\TTTTT^2\ar[r]_{ m }
&
\TTTTT
&&
&
\TTTTT^2\ar[r]_ { m  }
&
\TTTTT
}$$ 
\item Identity:
$$\xymatrix{ &
\TTTTT^2\ar[dl]_-{\TTTTT\eta \TTTTT}\ar[dr]^-{\TTTTT\eta\TTTTT}\ar[dd]|-{\Id _ {{}_{\TTTTT^2}}}
&
&&
&
\TTTTT^2\ar[d]|-{\TTTTT\eta\TTTTT}
&
\\
\TTTTT^3\ar[dr]_-{ m \TTTTT}\ar@{}[r]|-{\xLeftarrow{\tau\TTTTT} }
&
&
\TTTTT^3\ar@{}[l]|-{\xLeftarrow{\widehat{\TTTTT\iota}}}\ar[dl]^-{\TTTTT  m  }
&&
&\TTTTT^3\ar[dl]|-{ m \TTTTT}\ar[dr]|-{\TTTTT m  }
&
\\
&
\TTTTT^2\ar[d]|-{ m  }                 
&
&=&
\TTTTT^2\ar@{}[rr]|-{\xLeftarrow{\hskip 0.2cm \mu\hskip 0.2cm } }\ar[dr]|-{ m  }
&&
\TTTTT ^2\ar[dl]|-{ m  }
\\
&
\TTTTT
&
&&
&
\TTTTT
&
}$$ 
\end{itemize}
in which
$$\widehat{\TTTTT\iota } := \left( \tttt _ {{}_{\TTTTT}} \right)^{-1} \left(\TTTTT\iota \right) \left(\tttt _ {{}_{( m )(\eta\TTTTT ) }}\right)\qquad\qquad
\widehat{\TTTTT\mu }  :=  \left( \tttt _{{}_{( m ) ( m \TTTTT ) }}\right)^{-1}\left(\TTTTT\mu\right) \left( \tttt _{{}_{( m ) (\TTTTT m  ) }}\right). $$
Recall that the $\Cat$-enriched notion of monad is a pseudomonad $\TTTTT = (\TTTTT,  m  , \eta , \mu,   \iota, \tau )$ such that the invertible modifications $\mu,   \iota, \tau $ are identities and $m, \eta $ are $2$-natural transformations. In this case, we say that $\TTTTT = (\TTTTT,  m  , \eta )$ is a $2$-monad, omitting the identities.

\begin{defi}[Lax algebras]\label{LAXALGEBRAS}
Let $\TTTTT = (\TTTTT,  m  , \eta , \mu,   \iota, \tau )$ be a pseudomonad on $\bbb $. We define the $2$-category $\mathsf{Lax}\textrm{-}\TTTTT\textrm{-}\Alg _\ell $
as follows:
\begin{enumerate}
\item Objects: \textit{lax $\TTTTT$-algebras} are defined by $\mathtt{z}= (Z, \alg _ {{}_{\mathtt{z}}}, \overline{{\underline{\mathtt{z}}}}, \overline{{\underline{\mathtt{z}}}}_0 )$ in which $\alg _ {{}_{\mathtt{z}}}: \TTTTT Z\to Z $
is a morphism of $\bbb $ and $\overline{{\underline{\mathtt{z}}}}:\alg _ {{}_{\mathtt{z}}}\TTTTT(\alg _ {{}_{\mathtt{z}}})\Rightarrow \alg _ {{}_{\mathtt{z}}}m_{{}_{Z}}, \overline{{\underline{\mathtt{z}}}}_0: \Id _ {{}_{Z}}\Rightarrow\alg _ {{}_{\mathtt{z}}}\eta _ {{}_{Z}}  $ are $2$-cells of $\bbb $ satisfying the coherence axioms:

$$\xymatrix@C=4em{ 
\TTTTT ^3 Z\ar[r]^{\TTTTT ^2 (\alg _ {{}_{\mathtt{z}}} ) }\ar[rd]|-{\TTTTT (m _ {{}_{Z}}) }\ar[d]_{m_{{}_{\TTTTT Z}} }
&
\TTTTT ^2 Z\ar@{}[d]|{\xLeftarrow{\widehat{\TTTTT (\overline{{\underline{\mathtt{z}}}} )} }}\ar[rd]^{\TTTTT (\alg _ {{}_{\mathtt{z}}} )}
&\ar@{}[rdd]|{=}
&
\TTTTT ^3 Z\ar[r]^{\TTTTT ^2(\alg _ {{}_{\mathtt{z}}}) }\ar@{}[rd]|{\xLeftarrow{m _ {{}_{\alg _ {{}_{\mathtt{z}}} }}^{-1}  }}\ar[d]_{m _ {{}_{\TTTTT Z }} }
&
\TTTTT^2 Z\ar[d] ^{m _ {{}_{Z}} }\ar[rd]^{\TTTTT (\alg _ {{}_{\mathtt{z}}}) }
&
\\
\TTTTT^2 Z\ar@{}[r]|-{\xLeftarrow{\mu _ {{}_{ Z }}  }}\ar[rd]_-{m _ {{}_{Z}} }
&
\TTTTT ^2 Z\ar[d]_-{m _ {{}_{Z}} }\ar[r]^{\TTTTT (\alg _ {{}_{\mathtt{z}}} ) }\ar@{}[rd]|{\xLeftarrow{\overline{{\underline{\mathtt{z}}} }  }}
&
\TTTTT Z\ar[d]^{\alg _ {{}_{\mathtt{z}}} }
&
\TTTTT ^2 Z\ar[r]_-{\TTTTT (\alg _ {{}_{\mathtt{z}}} ) }\ar[rd]_-{m _ {{}_{Z}} }
&
\TTTTT Z\ar@{}[d]|{\xLeftarrow{\overline{{\underline{\mathtt{z}}} }  } } \ar@{}[r]|{\xLeftarrow{\overline{{\underline{\mathtt{z}}} }}  }\ar[rd]|-{\alg _ {{}_{\mathtt{z}}} }
&
\TTTTT Z\ar[d]^{\alg _ {{}_{\mathtt{z}}} }
\\
&
\TTTTT Z\ar[r]_ {\alg _ {{}_{\mathtt{z}}} }
&
Z
&
&
\TTTTT Z\ar[r]_ {\alg _ {{}_{\mathtt{z}}} }
&
Z
}$$
in which 
$\widehat{\TTTTT (\overline{{\underline{\mathtt{z}}}} )}:= \left( \tttt _{{}_{( \alg _ {{}_{\mathtt{z}}})  ( m _{{}_{Z}} ) }}\right)^{-1}\left(\TTTTT (\overline{{\underline{\mathtt{z}}}})\right) \left( \tttt _{{}_{( \alg _ {{}_{\mathtt{z}}} ) (\TTTTT (\alg _ {{}_{\mathtt{z}}})  ) }}\right)$ and the $2$-cells
$$\xymatrix{ 
\TTTTT Z\ar[rrr]^{\alg _ {{}_{\mathtt{z}}}}\ar[rd]^{\eta _ {{}_{\TTTTT Z}} }\ar@{=}[dd]
&
&
&
Z\ar[ld]_-{\eta _ {{}_{Z}} }\ar@{=}[dd]
&
\TTTTT Z\ar@{=}[rr]\ar@{=}[dd]\ar[rd]^{\TTTTT (\eta _ {{}_ {Z}} )}
&
&
\TTTTT Z\ar@{}[ld]|-{\xLeftarrow{\widehat{\TTTTT (\overline{{\underline{\mathtt{z}}}}_0) }} }\ar@{=}[d]
\\
\ar@{}[r]|-{\xLeftarrow{\iota _ {{}_{Z}} } }
&
\TTTTT ^2 Z\ar@{}[ru]|-{\xLeftarrow{\eta _ {{}_{\alg _ {{}_{\mathtt{z}}}}}^{-1} } }\ar[r]_{\TTTTT (\alg _ {{}_{\mathtt{z}}} ) }\ar[ld]^{m _ {{}_{Z}} }
\ar@{}[rd]|-{\xLeftarrow {\overline{{\underline{\mathtt{z}}} } } }
&
\TTTTT Z\ar@{}[r]|-{\xLeftarrow{\overline{{\underline{\mathtt{z}}} }_0 } }\ar[rd]_-{\alg _ {{}_{\mathtt{z}}} }
&
&
\ar@{}[r]|-{\xLeftarrow{\tau _ {{}_{Z}}^{-1} } }
&
\TTTTT ^2 Z\ar[r]_-{\TTTTT (\alg _ {{}_{\mathtt{z}}}) }\ar[ld]^- {m _{{}_{Z}} }\ar@{}[rd]|-{\xLeftarrow{\overline{{\underline{\mathtt{z}}} }} }
&
\TTTTT Z\ar[d]^-{\alg _ {{}_{\mathtt{z}}}}
\\
\TTTTT Z\ar[rrr]_{\alg _ {{}_{\mathtt{z}}} }
&
&
&
Z
&
\TTTTT Z\ar[rr]_-{\alg _ {{}_{\mathtt{z}}}}
&&
Z
}$$
are identities in which $\widehat{\TTTTT (\overline{{\underline{\mathtt{z}}}}_0 )} := \left(\tttt _ {{}_{( \alg _ {{}_{\mathtt{z}}} )(\eta _ {{}_{Z}} ) }}\right)^{-1} \left(\TTTTT (\overline{{\underline{\mathtt{z}}}}_0)  \right) 
\left( \tttt _ {{}_{\TTTTT Z}} \right)$.  
Recall that, if a lax algebra $\mathtt{z}= (Z, \alg _ {{}_{\mathtt{z}}}, \overline{{\underline{\mathtt{z}}}}, \overline{{\underline{\mathtt{z}}}}_0 )$ is such that $\overline{{\underline{\mathtt{z}}}}, \overline{{\underline{\mathtt{z}}}}_0 $ are invertible $2$-cells, then $\mathtt{z}$ is called a pseudoalgebra.

\item Morphisms: \textit{lax $\TTTTT$-morphisms} $\mathtt{f}:\mathtt{y}\to\mathtt{z} $ between lax $\TTTTT$-algebras $\mathtt{y}=(Y, \alg _ {{}_{\mathtt{y}}}, \overline{{\underline{\mathtt{y}}}}, \overline{{\underline{\mathtt{y}}}}_0 )$, $\mathtt{z}= (Z, \alg _ {{}_{\mathtt{z}}}, \overline{{\underline{\mathtt{z}}}}, \overline{{\underline{\mathtt{z}}}}_0 )$ are pairs $\mathtt{f} = (f, \left\langle \overline{\mathsf{f}}\right\rangle ) $ in which 
$f: Y\to Z  $
is a morphism in $\bbb $ and
$\left\langle   \overline{\mathsf{f}}\right\rangle :  \alg _ {{}_{\mathtt{z}}}\TTTTT (f) \Rightarrow f\alg _ {{}_{\mathtt{y}}}   $  
is a $2$-cell of $\bbb $ such that, defining $\widehat{\TTTTT (\left\langle   \overline{\mathsf{f}}\right\rangle )} : = \tttt ^{-1} _ {{}_{(f)(\alg _ {{}_{\mathtt{y}}}) }}  \TTTTT (\left\langle   \overline{\mathsf{f}}\right\rangle )\tttt _ {{}_{(\alg _ {{}_{\mathtt{z}}})(\TTTTT(f))  }}$, the equations

$$\xymatrix@C=2.4em{
&
\TTTTT ^2 Y\ar@{}[d]|-{\xLeftarrow{m_{{}_{f}}^{-1} } }\ar[ld]_ -{m _ {{}_{Y}} }\ar[r]^-{\TTTTT ^2(f) }
&
\TTTTT ^2 Z\ar[rd]^-{\TTTTT (\alg _ {{}_{\mathtt{z}}}) }\ar[ld]|-{m _ {{}_{Z}} }\ar@{}[dd]|-{\xLeftarrow{\,\,\overline{{\underline{\mathtt{z}}} }\, \,} }
&
\ar@{}[rdd]|-{=}
&
&
\TTTTT Z\ar[r]^{\alg _ {{}_{\mathtt{z}}}}\ar@{}[dd]|-{\xRightarrow{\widehat{\TTTTT (\left\langle   \overline{\mathsf{f}}\right\rangle)}}}
&
Z\ar@{}[d]|-{\xRightarrow{\left\langle\overline{\mathsf{f}}\right\rangle } }
&
\\
\TTTTT Y\ar[r]_{\TTTTT (f) }\ar[rd]_{\alg _ {{}_{\mathtt{y}}}}
&
\TTTTT Z\ar[rd]|-{\alg _ {{}_{\mathtt{z}}} }\ar@{}[d]|-{\xLeftarrow{\left\langle   \overline{\mathsf{f}}\right\rangle} }
&
&
\TTTTT Z\ar[ld]^-{\alg _ {{}_{\mathtt{z}}} }
&
\TTTTT ^2 Z\ar[ru]^-{\TTTTT (\alg _ {{}_{\mathtt{z}}} )}
&
&
\TTTTT Y\ar[lu]|-{\TTTTT (f) }\ar[r]^{\alg _ {{}_{\mathtt{y}}} }\ar@{}[d]|-{\xRightarrow{\,\overline{{\underline{\mathtt{y}}} }\, } }
&
Y\ar[lu]_ {f}
\\
&
Y\ar[r]_{f}
&
Z
&
&
&
\TTTTT ^2Y\ar[ru]|-{\TTTTT (\alg _ {{}_{\mathtt{y}}})}\ar[r]_{m _ {{}_{Y}} }\ar[lu]^{\TTTTT ^2(f)}
&
\TTTTT Y\ar[ru]_-{\alg _ {{}_{\mathtt{y}}} }
&
}$$
$$\xymatrix{ 
Y\ar[rr]^{f}\ar[d]_{\eta _ {{}_{Y}} }\ar@{}[rd]|-{\xLeftarrow{\eta _ {{}_{f}}^{-1}} }
&
&
Z\ar[ld]_-{\eta _ {{}_{Z}} }\ar@{=}[dd]\ar@{}[rdd]|-{=}
&
&
Y\ar[ld]_{\eta _ {{}_{Y}} }\ar@{=}[dd]
&
\\
\TTTTT Y\ar[r]_{\TTTTT (f)}\ar[d]_-{\alg _ {{}_{\mathtt{y}}} }\ar@{}[rd]|-{\xLeftarrow{\left\langle   \overline{\mathsf{f}}\right\rangle}}
&
\TTTTT Z\ar@{}[r]|-{\xLeftarrow{\overline{{\underline{\mathtt{z}}}}_0} }\ar[rd]_-{\alg _ {{}_{\mathtt{z}}} }
&
&
\TTTTT Y\ar@{}[r]|-{\xLeftarrow{\overline{{\underline{\mathtt{y}}}}_0 } }\ar[rd]_{\alg _ {{}_{\mathtt{y}}}}
&
&
\\
Y\ar[rr]_-{f}
&
&
Z
&
&
Y\ar[r]_{f}
&
Z
}$$
hold. Recall that a lax $\TTTTT $-morphism $\mathtt{f} = (f,  \left\langle \overline{\mathsf{f}}\right\rangle ) $ is called a $\TTTTT $-pseudomorphism if $\left\langle   \overline{\mathsf{f}}\right\rangle$ is an invertible $2$-cell. If $\left\langle \overline{\mathsf{f}}\right\rangle $ is an identity, $\mathtt{f}$ is called a (strict) $\TTTTT $-morphism.

\item $2$-cells: a \textit{$\TTTTT $-transformation}  $\mathfrak{m} : \mathtt{f}\Rightarrow\mathtt{h} $ between lax $\TTTTT $-morphisms $\mathtt{f} = (f,\left\langle   \overline{\mathsf{f}}\right\rangle )$, $\mathtt{h} = (h, \left\langle\overline{\mathsf{h}}\right\rangle )$ is a $2$-cell $\mathfrak{m} : f\Rightarrow h $ in $\bbb $ such that  the equation below holds.

$$\xymatrix{  \TTTTT Y\ar@/_4ex/[dd]_{\TTTTT (f) }
                    \ar@{}[dd]|{\xRightarrow{\TTTTT(\mathfrak{m}) } }
                    \ar@/^4ex/[dd]^{\TTTTT (h) }
										\ar[rr]^{\alg _ {{}_{\mathtt{y}}} } && 
										 Y\ar[dd]^{h }    
&&
\TTTTT Y \ar[rr]^{ \alg _ {{}_{\mathtt{y}}} }\ar[dd]_-{\TTTTT (f) }  &&
Y\ar@/_5ex/[dd]_{f}
                    \ar@{}[dd]|{\xRightarrow{\mathfrak{m}  } }
                    \ar@/^5ex/[dd]^{h }
\\
&\ar@{}[r]|{\xRightarrow{\hskip .2em \left\langle\overline{\mathsf{h}}\right\rangle  \hskip .2em } }   &
 &=& 
&\ar@{}[l]|{\xRightarrow{\hskip .2em \left\langle \overline{\mathsf{f}}\right\rangle  \hskip .2em } }  & 
\\
 \TTTTT Z\ar[rr]_ {\alg _ {{}_{\mathtt{z}}} } &&  Z
 &&
\TTTTT Z\ar[rr]_ {\alg _ {{}_{\mathtt{z}}} } && Z	 }$$

\end{enumerate}

The compositions are defined in the obvious way and these definitions make $\mathsf{Lax}\textrm{-}\TTTTT\textrm{-}\Alg _ {\ell } $ a $2$-category. The full sub-$2$-category of the pseudoalgebras of $\mathsf{Lax}\textrm{-}\TTTTT\textrm{-}\Alg _ {\ell } $ is denoted by $\mathsf{Ps}\textrm{-}\TTTTT\textrm{-}\Alg _ {\ell } $. Also, the locally full sub-$2$-category  consisting of lax algebras and pseudomorphisms between them is denoted by
$\mathsf{Lax}\textrm{-}\TTTTT\textrm{-}\Alg  $. Finally, the full sub-$2$-category of the pseudoalgebras of $\mathsf{Lax}\textrm{-}\TTTTT\textrm{-}\Alg  $ is denoted by  $\mathsf{Ps}\textrm{-}\TTTTT\textrm{-}\Alg $. In short, we have locally full inclusions:
$$\xymatrix{
\mathsf{Ps}\textrm{-}\TTTTT\textrm{-}\Alg\ar[r]\ar[d]
&
\mathsf{Ps}\textrm{-}\TTTTT\textrm{-}\Alg _ {\ell }\ar[d]
\\
\mathsf{Lax}\textrm{-}\TTTTT\textrm{-}\Alg \ar[r]|-{\ell }
&
\mathsf{Lax}\textrm{-}\TTTTT\textrm{-}\Alg  _ {\ell }
}$$

\end{defi}

\begin{rem}\label{coherenceinclusions}
If $\TTTTT = (\TTTTT,  m  , \eta ) $ is a $2$-monad, we denote by $\TTTTT\textrm{-}\Alg _{\ell }$ the full sub-$2$-category of \textit{strict algebras} of $\mathsf{Lax}\textrm{-}\TTTTT\textrm{-}\Alg _ {\ell } $. That is to say, the objects of   $\TTTTT\textrm{-}\Alg _{\ell }$ are the lax $\TTTTT $-algebras $\mathtt{y}=(Y, \alg _ {{}_{\mathtt{y}}}, \overline{{\underline{\mathtt{y}}}}, \overline{{\underline{\mathtt{y}}}}_0 )$ such that its $2$-cells $\overline{{\underline{\mathtt{y}}}}, \overline{{\underline{\mathtt{y}}}}_0 $ are identities. 

Also, we denote by $\TTTTT\textrm{-}\Alg$ the locally full sub-$2$-category of  $\TTTTT\textrm{-}\Alg _{\ell }$ consisting of strict algebras and pseudomorphisms between them. Finally, $\TTTTT\textrm{-}\Alg _ {\textrm{s} }$ is the locally full sub-$2$-category $\TTTTT\textrm{-}\Alg _{\ell }$ consisting of strict algebras and \textit{strict morphisms} between them. That is to say, the $1$-cells of   $\TTTTT\textrm{-}\Alg _ {\textrm{s} }$ are the pseudomorphisms $\mathtt{f} = (f,  \left\langle \overline{\mathsf{f}}\right\rangle ) $ such that $\left\langle   \overline{\mathsf{f}}\right\rangle$ is the identity. In this case, we have locally full inclusions
$$\xymatrix{\TTTTT\textrm{-}\Alg _ {\textrm{s} }\ar[r]\ar[d]
&
\TTTTT\textrm{-}\Alg \ar[r]\ar[d]
&
\TTTTT\textrm{-}\Alg _ {\ell }\ar[d]
\\
\mathsf{Ps}\textrm{-}\TTTTT\textrm{-}\Alg_{\textrm{s}}\ar[d]\ar[r]
&
\mathsf{Ps}\textrm{-}\TTTTT\textrm{-}\Alg\ar[r]\ar[d]
&
\mathsf{Ps}\textrm{-}\TTTTT\textrm{-}\Alg _ {\ell }\ar[d]
\\
\mathsf{Lax}\textrm{-}\TTTTT\textrm{-}\Alg _{\textrm{s}}\ar[r]
&
\mathsf{Lax}\textrm{-}\TTTTT\textrm{-}\Alg \ar[r]|-{\ell }
&
\mathsf{Lax}\textrm{-}\TTTTT\textrm{-}\Alg  _ {\ell }
}$$
in which the vertical arrows are full. 

\end{rem}

\begin{rem}
There is a vast literature of examples of pseudomonads, $2$-monads and their respective algebras, pseudoagebras and lax algebras~\cite{KOCK, Power, Hermida, Power2000}. The reader can keep in mind three very simple examples:

\begin{itemize}
\renewcommand\labelitemi{--}
\item The ``free $2$-monad'' $\TTTTT $ on $\Cat $ whose pseudoalgebras are unbiased monoidal categories. This is defined by $\TTTTT X:= \displaystyle\coprod_{n=0} ^\infty X^n $, in which $X^{n+1}:=X^n\times X $ and $X^0:= I$ is the terminal category, with the obvious pseudomonad structure. In this case, the $\TTTTT $-pseudomorphisms are the so called strong monoidal functors, while the lax $\TTTTT $-morphisms are the lax monoidal functors~\cite{Leinster}. 

\item The most simple example is the pseudomonad rising from a monoidal category.
A monoidal category $M$ is just a pseudomonoid~\cite{RSP} of $\Cat $ and, therefore, it gives rise to a pseudomonad $\TTTTT : \Cat\to \Cat $ defined by $\TTTTT X = M\times X $ with obvious unit and multiplication (and invertible modifications) coming from the monoidal structure of $M$. 
The pseudoalgebras and lax algebras of this pseudomonad are called, respectively, the pseudoactions and lax actions of $M$. Lax actions of a monoidal category $M$ are also called \textit{graded monads} (see \cite{gradedmonads}).

The inclusion $\Set\to\Cat $ is a strong monoidal functor w.r.t. the cartesian structures, since this functor preserves products. In particular, it takes monoids of $\Set $ to monoids of $\Cat $. In short, this means that we can see a monoid $M$ as a (discrete) strict monoidal category. Therefore, a monoid $M$ gives rise to a $2$-monad $\TTTTT X = M\times X $ as defined above. In this case, the $2$-categories $\TTTTT \textrm{-}\Alg _ {\textrm{s}}$, $\mathsf{Ps}\textrm{-}\TTTTT \textrm{-}\Alg$ and $\mathsf{Lax}\textrm{-}\TTTTT\textrm{-}\Alg  _ {\ell }$ are, respectively, the $2$-categories of (strict) actions, pseudoactions (as defined in \cite{D}) and lax actions of this monoid $M$ on categories. A lax action of the trivial monoid on a category is the same as a monad.

\item Let $\sss $ be a small $2$-category and $\aaa $ a $2$-category. We denote by $\sss _ 0 $ the discrete $2$-category of the objects of $\sss $ and by $\left[ \sss , \aaa\right]$ the $2$-category of $2$-functors, $2$-natural transformations and modifications. If the restriction $\left[ \sss , \aaa\right] \to \left[ \sss _ 0, \aaa\right] $ has a left $2$-adjoint (called the global left Kan extension), then the restriction is $2$-monadic and  $\left[ \sss , \aaa\right] _ {PS}$ is the $2$-category of $\TTTTT $-pseudoalgebras (in which $\TTTTT $ is the $2$-monad induced by the $2$-adjunction). Also, the $2$-category of lax algebras is the $2$-category $\left[  \sss , \Cat\right] _ {Lax} $ of lax functors $\sss\to\aaa $, lax natural transformations and modifications~\cite{Power}.

Again, if $M$ is a monoid (of $\Set$), $M$ can be seen as a category with only one object~\cite{Leinster2}, usually denoted by $\sum M $. That is to say, the locally discrete $2$-category $\sum M$ has only one object $\ast $ and $\sum M ( \ast , \ast ) : = M $ is the discrete category with the composition of $1$-cells given by the product of the monoid. In this case, the restriction 
$$\left[  \sum M , \Cat\right]\to \left[  	\left( \sum M\right) _0 , \Cat\right]\cong \Cat $$
has a left $2$-adjoint (and, as explained, it is $2$-monadic). The left $2$-adjoint is given by $X\mapsto \Lan _ {{}_{\left( \sum M\right) _0\to \sum M }} X $ in which
\begin{eqnarray*}
\Lan _ {{}_{\left( \sum M\right) _0\to \sum M }} X: &\sum M &\to\Cat\\
&\ast &\to M\times X\\
&M\ni g&\mapsto \overline{g} : (h,x)\mapsto (gh,x).
\end{eqnarray*}  
This $2$-adjunction is precisely the same $2$-adjunction between strict $\TTTTT $-algebras and the base $2$-category $\Cat $, if $\TTTTT $ is the $2$-monad $\TTTTT X = M\times X $ described above. Hence the $2$-category of pseudoalgebras $\left[  \sum M , \Cat\right] _ {PS}$ and the $2$-category $\left[  \sum M , \Cat\right] _ {Lax} $
are, respectively, isomorphic to the $2$-category of pseudoactions and the $2$-category of lax actions of $M$ on categories. Moreover, $\TTTTT $-$\Alg _ {\textrm{s}}\to\Cat $ is $2$-comonadic.

More generally, if $M$ is a monoidal category, $M$ can be seen as a bicategory with only one object (see \cite{Leinster, BE}), also denoted by $\sum M $. The restriction $2$-functor $\left[  \sum M , \Cat\right] _ {PS}\to \left[  	\left( \sum M\right) _0 , \Cat\right] _ {PS}\cong \Cat $ is pseudomonadic and pseudocomonadic. Furthermore, it coincides with the forgetful pseudofunctor $\mathsf{Ps}\textrm{-}\TTTTT\textrm{-}\Alg\to\Cat $ in which $\TTTTT X = M\times X $ is given by the structure of the monoidal category (as above).

\end{itemize}  
\end{rem}

\begin{rem}\label{forgetful}
Let $\TTTTT = (\TTTTT,  m  , \eta , \mu,   \iota, \tau )$ be a pseudomonad on a $2$-category $\bbb $. If $\ccc $ is any sub-$2$-category of $\mathsf{Lax}\textrm{-}\TTTTT\textrm{-}\Alg $, we have a forgetful $2$-functor
\begin{eqnarray*}
U: &\ccc & \to \bbb\\ 
&\mathtt{z}= (Z, \alg _ {{}_{\mathtt{z}}}, \overline{{\underline{\mathtt{z}}}}, \overline{{\underline{\mathtt{z}}}}_0 )&\mapsto Z\\
&\mathtt{f} = (f,  \left\langle \overline{\mathsf{f}}\right\rangle )&\mapsto f\\
&\mathfrak{m} &\mapsto \mathfrak{m}
\end{eqnarray*}
\end{rem}

\begin{prop}\label{laxeffectivedescent}
Let $\TTTTT = (\TTTTT,  m  , \eta , \mu,   \iota, \tau )$ be a pseudomonad on a $2$-category $\bbb $. Given lax $\TTTTT$-algebras 
$\mathtt{y}=(Y, \alg _ {{}_{\mathtt{y}}}, \overline{{\underline{\mathtt{y}}}}, \overline{{\underline{\mathtt{y}}}}_0 )$, $\mathtt{z}= (Z, \alg _ {{}_{\mathtt{z}}}, \overline{{\underline{\mathtt{z}}}}, \overline{{\underline{\mathtt{z}}}}_0 )$
the category $\mathsf{Lax}\textrm{-}\TTTTT\textrm{-}\Alg _\ell (\mathtt{y}, \mathtt{z}) $ is the strict lax descent object  of the diagram
$\mathbb{T}_{\mathtt{z}}^{\mathtt{y}}: \Delta _ \ell \to \Cat $
\begin{equation*}\tag{$\mathbb{T}_{\mathtt{z}}^{\mathtt{y}}$}
\xymatrix{  \bbb (U\mathtt{y}, U\mathtt{z})\ar@<-2.5ex>[rrr]_-{\bbb(\alg _ {{}_{\mathtt{y}}}, U\mathtt{z} )}\ar@<2.5ex>[rrr]^-{\bbb(\TTTTT U\mathtt{y}, \alg _ {{}_{\mathtt{z}}} )\circ \hspace{0.2em}\TTTTT _ {{}_{(U\mathtt{y},U\mathtt{z})}} } &&& \bbb (\TTTTT U\mathtt{y},  U\mathtt{z} )\ar[lll]|-{\bbb (\eta _ {{}_{U\mathtt{y}}} , U\mathtt{z})}
\ar@<-2.5 ex>[rrr]_-{\bbb(\TTTTT (\alg _ {{}_{\mathtt{y}}}), U\mathtt{z} )}\ar[rrr]|-{\bbb (m _{{}_{U\mathtt{y}}}, U\mathtt{z} )}\ar@<2.5ex>[rrr]^-{\bbb(\TTTTT ^2 U\mathtt{y}, \alg _ {{}_{\mathtt{z}}} )\circ \hspace{0.2em}\TTTTT _ {{}_{(\TTTTT U\mathtt{y},U\mathtt{z})}} } &&& \bbb (\TTTTT ^2 U \mathtt{y}, U\mathtt{z} ) }
\end{equation*}
such that
\small
\begin{eqnarray*}
\begin{aligned}
&\mathbb{T}_{\mathtt{z}}^{\mathtt{y}}(\sigma _ {20}) _{{}_{f}} &:=& \left( \id _ {{}_{  \alg _ {{}_{\mathtt{z}}} }}\ast \tttt _ {{}_{(f)(\alg _ {{}_{\mathtt{y}}}) }}  \right)   \\
&\mathbb{T}_{\mathtt{z}}^{\mathtt{y}}(\sigma _ {21}) _{{}_{f}} &:=&  \left( \id_ {{}_{f}}\ast \overline{{\underline{\mathtt{y}}} }\right)\\
&\mathbb{T}_{\mathtt{z}}^{\mathtt{y}}(n _ {1}) _{{}_{f}} &:=& \left( \id _ {{}_{f}}\ast \overline{{\underline{\mathtt{y}}}}_0 \right)
\end{aligned}
\begin{aligned}
&\mathbb{T}_{\mathtt{z}}^{\mathtt{y}}(\sigma _ {00}) _{{}_{f}} &:=& \left( \id _ {{}_{ \alg _ {{}_{\mathtt{z}}}  }}\ast m _ {{}_{f}}^{-1}\right)\cdot\left( \overline{{\underline{\mathtt{z}}}}\ast\id _ {{}_{\TTTTT ^2(f)  }}\right)\cdot
\left(\id _ {{}_{\alg _ {{}_{\mathtt{z}}} }}\ast \tttt _ {{}_{(\alg _ {{}_{\mathtt{z}}}) (\TTTTT(f)) }}^{-1} \right) \\
&\mathbb{T}_{\mathtt{z}}^{\mathtt{y}}(n _ {0}) _{{}_{f}} &:=&  
\left(\id _{{}_{\alg _ {{}_{\mathtt{z}}}}}\ast\eta _ {{}_{f}}^{-1} \right)\cdot \left(\overline{{\underline{\mathtt{z}}}}_0\ast\id _ {{}_{f}}\right)
\end{aligned}
\end{eqnarray*}
\normalsize
Furthermore, the strict descent object of $\mathbb{T}_{\mathtt{z}}^{\mathtt{y}}$ is $\mathsf{Lax}\textrm{-}\TTTTT\textrm{-}\Alg  (\mathtt{y}, \mathtt{z}) $.
\end{prop}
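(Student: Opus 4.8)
The plan is to realize $\mathbb{T}_{\mathtt{z}}^{\mathtt{y}}$ as a genuine $2$-functor and then to read off its strict lax descent object through the explicit description of Remark \ref{strictdescentstrict}, matching it term by term with Definition \ref{LAXALGEBRAS}. First I would invoke Proposition \ref{Presentation}: since $\Delta_\ell$ is the $2$-category freely generated by the computad $\mathfrak{\Delta}_\ell$, to produce a $2$-functor $\mathbb{T}_{\mathtt{z}}^{\mathtt{y}}:\Delta_\ell\to\Cat$ it suffices to give a map of computads $\mathfrak{\Delta}_\ell\to\uuu\Cat$. This is exactly the data displayed in the statement: on objects $\mathsf{1},\mathsf{2},\mathsf{3}\mapsto \bbb(Y,Z),\bbb(\TTTTT Y,Z),\bbb(\TTTTT^2Y,Z)$; on the generating $1$-cells the indicated functors, so that in particular $\mathbb{T}_{\mathtt{z}}^{\mathtt{y}}(d^0)f=\alg_{{}_{\mathtt{z}}}\TTTTT(f)$, $\mathbb{T}_{\mathtt{z}}^{\mathtt{y}}(d^1)f=f\alg_{{}_{\mathtt{y}}}$, and $\mathbb{T}_{\mathtt{z}}^{\mathtt{y}}(s^0)$, $\mathbb{T}_{\mathtt{z}}^{\mathtt{y}}(\partial^j)$ the corresponding (pre/post)composition functors; and on the generating $2$-cells the natural transformations $\mathbb{T}_{\mathtt{z}}^{\mathtt{y}}(\sigma_{ij})$, $\mathbb{T}_{\mathtt{z}}^{\mathtt{y}}(n_i)$ written in the statement. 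The only thing to check at this stage is that each displayed component really is a natural transformation between the correct composite functors, i.e. that the domains and codomains agree with the paths $\partial^0d^0,\partial^1d^0,\dots$; this is routine whiskering/pasting bookkeeping using the pseudofunctoriality constraints $\tttt$ of $\TTTTT$ and the pseudonaturality constraints $m_{{}_{f}},\eta_{{}_{f}}$.

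With $\mathbb{T}_{\mathtt{z}}^{\mathtt{y}}$ a $2$-functor, Remark \ref{strictdescentstrict} applies and identifies the objects of its strict lax descent object with pairs $(f,\langle\overline{\mathsf{f}}\rangle)$, where $f\in\bbb(Y,Z)$ and $\langle\overline{\mathsf{f}}\rangle$ is a morphism $\mathbb{T}_{\mathtt{z}}^{\mathtt{y}}(d^1)f\to\mathbb{T}_{\mathtt{z}}^{\mathtt{y}}(d^0)f$ in $\bbb(\TTTTT Y,Z)$, that is (once the orientation convention for $\vartheta$ is tracked) exactly a structure $2$-cell between $\alg_{{}_{\mathtt{z}}}\TTTTT(f)$ and $f\alg_{{}_{\mathtt{y}}}$ as in the definition of a lax $\TTTTT$-morphism. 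The heart of the proof is then to substitute the prescribed values $\mathbb{T}_{\mathtt{z}}^{\mathtt{y}}(\sigma_{00})_{{}_f},\mathbb{T}_{\mathtt{z}}^{\mathtt{y}}(\sigma_{20})_{{}_f},\mathbb{T}_{\mathtt{z}}^{\mathtt{y}}(\sigma_{21})_{{}_f}$ and $\mathbb{T}_{\mathtt{z}}^{\mathtt{y}}(n_0)_{{}_f},\mathbb{T}_{\mathtt{z}}^{\mathtt{y}}(n_1)_{{}_f}$ into the Associativity and Identity equations of Remark \ref{strictdescentstrict} and to verify that, after whiskering by $\mathbb{T}_{\mathtt{z}}^{\mathtt{y}}(\partial^j)$ and $\mathbb{T}_{\mathtt{z}}^{\mathtt{y}}(s^0)$ and using the interchange law, they become literally the two coherence axioms imposed on $\langle\overline{\mathsf{f}}\rangle$ in Definition \ref{LAXALGEBRAS}. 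The factors $\tttt,m_{{}_f},\eta_{{}_f},\overline{{\underline{\mathtt{y}}}},\overline{{\underline{\mathtt{y}}}}_0,\overline{{\underline{\mathtt{z}}}},\overline{{\underline{\mathtt{z}}}}_0$ appearing in the prescribed $2$-cells are precisely the ones occurring in the pasting diagrams of the lax morphism axioms, so the two equations should coincide once the pastings are normalized. I expect this matching to be the main obstacle, since it requires careful orientation tracking and repeated use of pseudonaturality and of the pseudofunctoriality constraint, with the pseudomonad modifications $\mu,\iota,\tau$ being absorbed exactly where the $\widehat{\TTTTT(-)}$ abbreviations occur.

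Finally I would match the $1$-cells. By Remark \ref{strictdescentstrict} a morphism of the strict lax descent object from $(f,\langle\overline{\mathsf{f}}\rangle)$ to $(h,\langle\overline{\mathsf{h}}\rangle)$ is a morphism $\mathfrak{m}:f\to h$ of $\bbb(Y,Z)$, i.e. a $2$-cell $\mathfrak{m}:f\Rightarrow h$ of $\bbb$, satisfying $\mathbb{T}_{\mathtt{z}}^{\mathtt{y}}(d^0)(\mathfrak{m})\,\langle\overline{\mathsf{f}}\rangle=\langle\overline{\mathsf{h}}\rangle\,\mathbb{T}_{\mathtt{z}}^{\mathtt{y}}(d^1)(\mathfrak{m})$; unwinding $\mathbb{T}_{\mathtt{z}}^{\mathtt{y}}(d^0)(\mathfrak{m})=\id_{{}_{\alg_{{}_{\mathtt{z}}}}}\ast\TTTTT(\mathfrak{m})$ and $\mathbb{T}_{\mathtt{z}}^{\mathtt{y}}(d^1)(\mathfrak{m})=\mathfrak{m}\ast\id_{{}_{\alg_{{}_{\mathtt{y}}}}}$ shows this is exactly the $\TTTTT$-transformation equation of Definition \ref{LAXALGEBRAS}. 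Since the object and morphism correspondences are mutually inverse and compatible with composition, they assemble into an isomorphism of categories $\left\{\dot{\Delta}_\ell(\mathsf{0},\t-),\mathbb{T}_{\mathtt{z}}^{\mathtt{y}}\right\}\cong\mathsf{Lax}\textrm{-}\TTTTT\textrm{-}\Alg_\ell(\mathtt{y},\mathtt{z})$. For the last assertion, Remark \ref{strictdescentstrict} also exhibits the strict descent object $\left\{\dot{\Delta}(\mathsf{0},\j-),\mathbb{T}_{\mathtt{z}}^{\mathtt{y}}\right\}$ as the full subcategory of those pairs $(f,\langle\overline{\mathsf{f}}\rangle)$ for which $\langle\overline{\mathsf{f}}\rangle$ is invertible; under the identification just obtained these are precisely the $\TTTTT$-pseudomorphisms, which are the $1$-cells of $\mathsf{Lax}\textrm{-}\TTTTT\textrm{-}\Alg$, so this full subcategory is $\mathsf{Lax}\textrm{-}\TTTTT\textrm{-}\Alg(\mathtt{y},\mathtt{z})$.
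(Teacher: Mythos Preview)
Your proposal is correct and is precisely a detailed elaboration of the paper's one-line proof, which simply says the result follows from Definition \ref{LAXALGEBRAS} and Remark \ref{strictdescentstrict}. One small inaccuracy: the pseudomonad modifications $\mu,\iota,\tau$ do not enter the lax $\TTTTT$-morphism axioms of Definition \ref{LAXALGEBRAS} (only the pseudofunctoriality constraints $\tttt$ and the pseudonaturality cells $m_{{}_f},\eta_{{}_f}$, together with the algebra $2$-cells, appear there), so they play no role in the matching you describe.
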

\begin{proof}
It follows from Definition \ref{LAXALGEBRAS} and Remark \ref{strictdescentstrict}.
\end{proof}

\begin{rem}\label{indeedpseudofunctor}
In the context of the proposition above, we can define a pseudofunctor $\mathbb{T}^{\mathtt{y}}: \Delta _ \ell\times \mathsf{Lax}\textrm{-}\TTTTT\textrm{-}\Alg\to\Cat $ in which $\mathbb{T}^{\mathtt{y}}(-,{\mathtt{z}}) : = \mathbb{T}^{\mathtt{y}}_ {\mathtt{z}}$, since the morphisms defined above are actually pseudonatural in $\mathtt{z}$ w.r.t. $\TTTTT $-pseudomorphisms and $\TTTTT $-transformations.
\end{rem}

Assume that the triangles below are commutative, $R$ is a right biadjoint pseudofunctor and the arrows without labels are the forgetful $2$-functors of Remark \ref{forgetful}. By Corollary \ref{used}, it follows from Proposition \ref{laxeffectivedescent} (and last remark) that, whenever $\aaa $ has lax codescent objects, $\ell\circ  J$ is right biadjoint to a pseudofunctor $G$. Also, for each lax algebra $\mathtt{y}$, there is a diagram $\AAAA _ \mathtt{y}$ such that  $G\mathtt{y}\simeq \dot{\Delta }_{\ell } (\mathsf{0}, \t -)\Asterisk _ {\bi }\AAAA _\mathtt{y}$ defines the left biadjoint to $\ell\circ J$. Moreover, $J$ is right biadjoint as well if $\aaa $ has codescent objects of these diagrams $\AAAA _\mathtt{y}$. Next section, we give precisely the diagrams $\AAAA _\mathtt{y}$ and prove a strict version of our theorem as a consequence of Lemma \ref{lemmainstrict}.

$$\xymatrix{\aaa\ar[rr]^-{J}\ar[drr]_-{R}&&\mathsf{Lax}\textrm{-}\TTTTT\textrm{-}\Alg \ar[d]\ar[r]^{\ell}&\mathsf{Lax}\textrm{-}\TTTTT\textrm{-}\Alg _\ell\ar[ld] \\
&& \bbb &}$$

\section{Lifting of biadjoints to lax algebras}\label{l}

In this section, we give our results on lifting right biadjoints to the $2$-category of lax algebras of a given pseudomonad. As explained above, we already have such results by Corollary \ref{used} and Proposition \ref{laxeffectivedescent}. But, in this section, we present an explicit calculation of the diagrams $\AAAA _\mathtt{y}$ whose lax codescent objects are needed in the construction of our left biadjoint.

\begin{defi}\label{trianglediagram}
Let $(E\dashv R, \rho , \varepsilon , v,w)$ be a biadjunction and $\TTTTT = (\TTTTT,  m  , \eta , \mu,   \iota, \tau )$ a pseudomonad on $\bbb $ such that 
$$\xymatrix{  \aaa\ar[rr]^-{J}\ar[dr]_-{R}&&\mathsf{Lax}\textrm{-}\TTTTT\textrm{-}\Alg\ar[dl]^{U}\\
&\bbb  & }$$
is commutative, in which $U$ is the forgetful $2$-functor defined in Remark \ref{forgetful}.
In this setting, for each lax $\TTTTT$-algebra $\mathtt{y}= (Y, \alg _ {{}_{\mathtt{y}}}, \overline{{\underline{\mathtt{y}}}}, \overline{{\underline{\mathtt{y}}}}_0 )$, we define the $2$-functor
$\AAAA _ \mathtt{y} : \Delta _\ell ^{\op }\to \aaa  $
\small
\begin{equation*}\tag{$\AAAA_\mathtt{y}$}
\xymatrix{  E U\mathtt{y}\ar[rrrr]|-{E(\eta _{{}_{U\mathtt{y}}})} &&&&  E\TTTTT U\mathtt{y}\ar@<3ex>[llll]^-{E(\alg _ {{}_{\mathtt{y}}})}\ar@<-3ex>[llll]_-{\varepsilon _ {{}_{EU\mathtt{y}}}E( \alg_{{}_{JEU\mathtt{y}}}\TTTTT (\rho _ {{}_{U\mathtt{y} }} ))   } 
 &&&& \ar@<3ex>[llll]^-{E\TTTTT (\alg _{{}_{\mathtt{y}}})}\ar[llll]|-{E(m _ {{}_{U\mathtt{y}}} )}\ar@<-3ex>[llll]_-{ \varepsilon _ {{}_{E\TTTTT U\mathtt{y}}}E( \alg_{{}_{JE\TTTTT U\mathtt{y}}}\TTTTT (\rho _ {{}_{\TTTTT U\mathtt{y} }} )) } E\TTTTT ^2 U\mathtt{y} }
\end{equation*}
\normalsize
in which
\small
$$
\AAAA _\mathtt{y} (\sigma _{21})        : = \eeee _ {{}_{(\alg _{{}_{\mathtt{y}}})( m _{{}_{U\mathtt{y} }}  )}}^{-1} \cdot E(\overline{\underline{\mathtt{y}} })\cdot \eeee _ {{}_{(\alg _{{}_{\mathtt{y}}})(\TTTTT (\alg _{{}_{\mathtt{y}}}) ) }}\\
\qquad\qquad\AAAA _\mathtt{y} (n_0) : = \eeee _ {{}_{(\alg _{{}_{\mathtt{y}}})(\eta _ {{}_{U\mathtt{y}}}) }}^{-1}\cdot E(\overline{{\underline{\mathtt{y}}}}_0 )\cdot \eeee _ {{}_{U\mathtt{y}}}$$
\begin{eqnarray*}
\AAAA _\mathtt{y} (n_1)        &: =& 
\left(\left(\id _ {{}_{\varepsilon _ {{}_{EU\mathtt{y} }} }}\right) \ast \left(\eeee _ {{}_{(\alg _{{}_{JEU\mathtt{y}}} \TTTTT (\rho _ {{}_{U\mathtt{y}}}) )(\eta _ {{}_{U\mathtt{y}}})  }     }^{-1}  \cdot E(\id _ {{}_{ \alg _ {{}_{JEU\mathtt{y} }}   }}\ast\eta _ {{}_{\rho _ {{}_{U\mathtt{y} }}   }} ^{-1})  
\cdot E(\overline{\underline{JEU\mathtt{y}}}_0\ast\id _ {{}_{\rho _ {{}_{U\mathtt{y} }} }}        )\right) \right)\cdot v_{{}_{U\mathtt{y}}}  \\
\AAAA _\mathtt{y} (\sigma _{20})        &: =& 
\left(\varepsilon _ {{}_{E(\alg _ {{}_{\mathtt{y} }})}}^{-1}\ast\id _ {{}_{E(\alg _ {{}_{JE\TTTTT \mathtt{y} }}\TTTTT(\rho _ {{}_{\TTTTT U\mathtt{y}}} )  ) }}\right)\cdot\left(\id _ {{}_{\varepsilon _ {{}_{EU\mathtt{y} }}   }}\ast\eeee _ {{}_{(RE(\alg _ {{}_{\mathtt{y}}}))(\alg _ {{}_{JE\TTTTT U\mathtt{y} }}\TTTTT (\rho _{{}_{\TTTTT U\mathtt{y} }} ))}}^{-1}\right)\cdot\\
&&
\left(\id _ {{}_{\varepsilon _ {{}_{EU\mathtt{y} }}   }}\ast\left(
E(\left\langle\overline{JE(\alg _ {{}_{\mathtt{y}}} })\right\rangle\ast\id _ {{}_{\TTTTT (\rho_ {{}_{\TTTTT U\mathtt{y} }} ) }})\cdot
E(\id _ {{}_{\alg _ {{}_{JEU\mathtt{y} }} }}\ast (\TTTTT\rho ) ^{-1}_{{}_{\alg _ {{}_{\mathtt{y}}} }})\right)\right)\cdot
\\
&&
\left(\id _ {{}_{\varepsilon _ {{}_{EU\mathtt{y} }}   }}\ast\left(
\eeee _ {{}_{(\alg _ {{}_{JEU\mathtt{y}}}\TTTTT (\rho _ {{}_{U\mathtt{y}}}))(\TTTTT (\alg _ {{}_{\mathtt{y}}} ))}}\right)\right)    
\end{eqnarray*}
\begin{eqnarray*}
\AAAA _\mathtt{y} (\sigma _{00})        &: =& \left(\id _ {{}_{\varepsilon _ {{}_{EU\mathtt{y} }}   }}\ast
\left(\eeee _ {{}_{(\alg _ {{}_{JEU\mathtt{y}}}\TTTTT (\rho _ {{}_{U\mathtt{y}}}))(m _ {{}_{U\mathtt{y}}} )}}^{-1}\cdot 
E(\id_ {{}_{\alg _ {{}_{JEU\mathtt{y}}} }}\ast m _ {{}_{\rho _ {{}_{U\mathtt{y} }}  }}^{-1})\cdot E(\overline{\underline{JEU\mathtt{y}}}\ast
\id _ {{}_{\TTTTT ^2 (\rho _ {{}_{U\mathtt{y}}} )}})\right)\right)\cdot\\
&&
\left(\id _ {{}_{\varepsilon _ {{}_{EU\mathtt{y} }}   }}\ast
E\left(\id _ {{}_{\alg _ {{} _{JE\TTTTT U\mathtt{y}}} }}\ast \left(\TTTTT (w_ {{}_{EU\mathtt{y}}})\cdot\tttt ^{-1} _{{}_{(R(\varepsilon _ {{}_{EU\mathtt{y}}}))(\rho 
_ {{}_{REU\mathtt{y}}}) }}\right)\ast\id _ {{}_{\TTTTT (\alg _ {{}_{JEU\mathtt{y}}})\TTTTT ^2(\rho _ {{}_{U\mathtt{y}}}) }}\right)\right)\cdot\\
&&
\left(\id _ {{}_{\varepsilon _ {{}_{EU\mathtt{y} }}   }}\ast E\left(\left\langle\overline{J(\varepsilon _ {{}_{EU\mathtt{y}}} )}\right\rangle ^{-1}\ast\id _ {{}_{\TTTTT (\rho _ {{}_{REU\mathtt{y}}} )   }}\ast\tttt _ {{}_{(\alg _ {{}_{JEU\mathtt{y}}}  ) (\TTTTT(\rho  _ {{}_{U\mathtt{y}}} ))}}^{-1}\right)\right)\cdot\\
&&
\left(\id _ {{}_{\varepsilon _ {{}_{EU\mathtt{y} }}   }}\ast  \eeee _ {{}_{(R(\varepsilon _ {{}_{EU\mathtt{y}}}))(\alg _ {{}_{JEU\TTTTT U\mathtt{y}}}\TTTTT (\rho _ {{}_{REU\mathtt{y}}})\TTTTT(\alg _{{}_{JEU\mathtt{y}}}\TTTTT (\rho  _ {{}_{U\mathtt{y}}})))      }}\right)\cdot\\
&&
\left(\varepsilon _ {{}_{\varepsilon _ {{}_{EU\mathtt{y}}} }}\ast E\left(\left(\id _ {{}_{\alg _ {{}_{JEU\TTTTT U\mathtt{y} }} }}\ast (\TTTTT \rho )_ {{}_{\alg _ {{}_{JEU\mathtt{y} }}\TTTTT (\rho _ {{}_{U\mathtt{y} }} )   }}  \right)\cdot \left(\left\langle\overline{JE(\alg _ {{}_{JEU\mathtt{y} }}\TTTTT (\rho _ {{}_{U\mathtt{y}}} ) )}\right\rangle ^{-1} \right) \right)\right)\cdot\\
&&
\left(\id _ {{}_{\varepsilon _ {{}_{EU\mathtt{y}}}\varepsilon _ {{}_{EREU\mathtt{y}}}}}\ast \eeee _ {{}_{(RE(\alg _ {{}_{JEU\mathtt{y} }}\TTTTT (\rho _ {{}_{U\mathtt{y}}} )))(\alg _ {{}_{JE\TTTTT U\mathtt{y} }}\TTTTT (\rho _ {{}_{\TTTTT U\mathtt{y}}} ))}}\right)\cdot\\
&&
\left(\id _ {{}_{\varepsilon _ {{}_{EU\mathtt{y}}}}}\ast \varepsilon _ {{}_{E(\alg _ {{}_{JEU\mathtt{y} }} \TTTTT (\rho _ {{}_{U\mathtt{y}}})) }}\ast \id _ {{}_{E(\alg _ {{}_{JE\TTTTT U\mathtt{y} }} \TTTTT (\rho _ {{}_{\TTTTT U\mathtt{y}}}))}}\right) 
\end{eqnarray*} 
\normalsize
\end{defi}

\begin{theo}[Biadjoint Triangle Theorem]\label{GOAL}
Let $(E\dashv R, \rho , \varepsilon , v,w)$ be a biadjunction, $\TTTTT = (\TTTTT,  m  , \eta , \mu,   \iota, \tau )$ a pseudomonad on $\bbb $ and $\ell :\mathsf{Lax}\textrm{-}\TTTTT\textrm{-}\Alg\to\mathsf{Lax}\textrm{-}\TTTTT\textrm{-}\Alg _ \ell $ the inclusion. Assume that
$$\xymatrix{  \aaa\ar[rr]^-{J}\ar[dr]_-{R}&&\mathsf{Lax}\textrm{-}\TTTTT\textrm{-}\Alg\ar[dl]^{U}\\
&\bbb  & }$$
is commutative. The pseudofunctor $\ell\circ J$ is right biadjoint if and only if $\aaa $ has the lax codescent object of the diagram $\AAAA _ \mathtt{y}: \Delta ^\op _ \ell\to \aaa $ for every lax $\TTTTT$-algebra $\mathtt{y}$. In this case, the left biadjoint $G$ is defined by $G\mathtt{y} = \dot{\Delta } _ \ell (\mathsf{0}, \t - ) \Asterisk _ {\bi } \AAAA _\mathtt{y} $

Furthermore, $J$ is right biadjoint if and only if $\aaa $ has the codescent object of the diagram $\AAAA _ \mathtt{y}: \Delta ^\op _ \ell\to \aaa $ for every lax $\TTTTT$-algebra $\mathtt{y}$. In this case, the left biadjoint $G '$ is defined by $G '\mathtt{y} = \dot{\Delta } (\mathsf{0}, \j - ) \Asterisk _ {\bi } \AAAA _\mathtt{y} $
\end{theo}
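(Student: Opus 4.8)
The plan is to obtain both equivalences as instances of the machinery of Section~\ref{lift}, feeding it the diagrams supplied by Proposition~\ref{laxeffectivedescent}, and then to verify by hand that the diagram produced abstractly is the explicit $\AAAA _\mathtt{y}$ of Definition~\ref{trianglediagram}. For each lax $\TTTTT$-algebra $\mathtt{y}$ I would take the pseudofunctor $\mathbb{T}^{\mathtt{y}}:\Delta _\ell\times\mathsf{Lax}\textrm{-}\TTTTT\textrm{-}\Alg\to\Cat$ of Remark~\ref{indeedpseudofunctor}, so that $\mathbb{T}^{\mathtt{y}}(-,\mathtt{z})=\mathbb{T}^{\mathtt{y}}_{\mathtt{z}}$. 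To treat $\ell\circ J$ I would apply Corollary~\ref{used} with $\VVVV=\ell$, $\sss=\Delta_\ell$, weight $\WWWW=\dot{\Delta}_\ell(\mathsf{0},\t-)$ and $\DDDD_\mathtt{y}=\mathbb{T}^{\mathtt{y}}$; to treat $J$ I would apply Theorem~\ref{almostmain} with the same $\DDDD_\mathtt{y}$ but weight $\WWWW=\dot{\Delta}(\mathsf{0},\j-)$. For the bilimit hypothesis, Proposition~\ref{laxeffectivedescent} identifies $\mathsf{Lax}\textrm{-}\TTTTT\textrm{-}\Alg_\ell(\mathtt{y},\mathtt{x})$ and $\mathsf{Lax}\textrm{-}\TTTTT\textrm{-}\Alg(\mathtt{y},\mathtt{x})$ with the strict lax descent and strict descent objects of $\mathbb{T}^{\mathtt{y}}_\mathtt{x}$; since by Lemma~\ref{strictequivalence} the weights $\dot{\Delta}_\ell(\mathsf{0},\t-)$ and $\dot{\Delta}(\mathsf{0},\j-)$ are flexible, these coincide with the weighted bilimits, yielding $\{\dot{\Delta}_\ell(\mathsf{0},\t-),\mathbb{T}^{\mathtt{y}}_\mathtt{x}\}_{\bi}\simeq\mathsf{Lax}\textrm{-}\TTTTT\textrm{-}\Alg_\ell(\mathtt{y},\mathtt{x})$ and $\{\dot{\Delta}(\mathsf{0},\j-),\mathbb{T}^{\mathtt{y}}_\mathtt{x}\}_{\bi}\simeq\mathsf{Lax}\textrm{-}\TTTTT\textrm{-}\Alg(\mathtt{y},\mathtt{x})$.

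For the factorization hypothesis I would note that the restriction $|\mathbb{T}^{\mathtt{y}}|$ records only postcomposition by $\TTTTT$-pseudomorphisms, so $|\mathbb{T}^{\mathtt{y}}|(\mathsf{n},\mathtt{z})=\bbb(\TTTTT^{n-1}U\mathtt{y},U\mathtt{z})$ for $\mathsf{n}\in\{\mathsf{1},\mathsf{2},\mathsf{3}\}$, with $\TTTTT^0=\Id$; hence $|\mathbb{T}^{\mathtt{y}}|\simeq U^\ast\circ(\DDDD'_\mathtt{y}\times\Id)$ for the pseudofunctor $\DDDD'_\mathtt{y}:\left(\Delta_\ell\right)_0\to\bbb^{\op}$ given by $\DDDD'_\mathtt{y}(\mathsf{n})=\TTTTT^{n-1}U\mathtt{y}$, and likewise $|\mathbb{T}^{\mathtt{y}}(-,J-)|\simeq R^\ast\circ(\DDDD'_\mathtt{y}\times\Id)$ because $U\circ J=R$. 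With both hypotheses in force, Corollary~\ref{used} and Theorem~\ref{almostmain} each produce a diagram $\AAAA_\mathtt{y}:\Delta_\ell^{\op}\to\aaa$ with $\mathbb{T}^{\mathtt{y}}(-,J-)\simeq\aaa(\AAAA_\mathtt{y}-,-)$ and give the two stated ``if and only if'' assertions, the left biadjoints being $\dot{\Delta}_\ell(\mathsf{0},\t-)\Asterisk_{\bi}\AAAA_\mathtt{y}$ and $\dot{\Delta}(\mathsf{0},\j-)\Asterisk_{\bi}\AAAA_\mathtt{y}$ respectively. Since $\AAAA_\mathtt{y}$ is pinned down up to equivalence by the weight-independent relation $\mathbb{T}^{\mathtt{y}}(-,J-)\simeq\aaa(\AAAA_\mathtt{y}-,-)$, the very same diagram serves both parts, which is why a single $\AAAA_\mathtt{y}$ appears in the statement.

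It remains to match this $\AAAA_\mathtt{y}$ with the explicit diagram of Definition~\ref{trianglediagram}, and this is where the real work lies. Following the proof of Theorem~\ref{almostmain}, $\AAAA_\mathtt{y}$ may be chosen as an extension of $E\circ\DDDD'_\mathtt{y}$, so its objects are $EU\mathtt{y}$, $E\TTTTT U\mathtt{y}$, $E\TTTTT^2 U\mathtt{y}$, as required. I would then read off each $1$-cell and $2$-cell of $\AAAA_\mathtt{y}$ by transporting the corresponding datum of $\mathbb{T}^{\mathtt{y}}$ across the pseudonatural equivalence $\chi:\bbb(-,R-)\simeq\aaa(E-,-)$ of Remark~\ref{addd} and invoking the bicategorical Yoneda Lemma: for a $1$-cell this reduces to evaluating at the relevant identity via $\chi^{-1}(\id)=\rho$, which turns the arrow $f\mapsto\alg_\mathtt{z}\TTTTT(f)$ into $\varepsilon_{EU\mathtt{y}}E(\alg_{JEU\mathtt{y}}\TTTTT(\rho_{U\mathtt{y}}))$, while the arrows given by precomposition with $\alg_\mathtt{y}$, $\eta_{U\mathtt{y}}$, $\TTTTT(\alg_\mathtt{y})$, $m_{U\mathtt{y}}$ become $E(\alg_\mathtt{y})$, $E(\eta_{U\mathtt{y}})$, $E\TTTTT(\alg_\mathtt{y})$, $E(m_{U\mathtt{y}})$, matching the diagram. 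The $2$-cells $\AAAA_\mathtt{y}(\sigma_{ij})$ and $\AAAA_\mathtt{y}(n_i)$ are computed the same way, and their verification is the main obstacle: transporting the structural $2$-cells of Proposition~\ref{laxeffectivedescent} through the explicit formula for $\chi_{(h,g)}$ forces one to keep track of the biadjunction coherence modifications $v$ and $w$ together with the pseudonaturality constraints $\eeee$ and $\tttt$, and it is exactly this bookkeeping --- heaviest for $\AAAA_\mathtt{y}(\sigma_{00})$ --- that yields the long expressions recorded in Definition~\ref{trianglediagram}.
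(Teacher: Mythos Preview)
Your proposal is correct and follows essentially the same route as the paper. The only presentational difference is that the paper bypasses Corollary~\ref{used}/Theorem~\ref{almostmain} and applies Lemma~\ref{lemmain} directly: rather than first extracting an abstract $\AAAA_\mathtt{y}$ and then matching it with Definition~\ref{trianglediagram}, the paper simply writes down the explicit pseudonatural equivalence $\psi^{\mathtt{y}}:\mathbb{T}^{\mathtt{y}}(-,J-)\to\aaa(\AAAA_\mathtt{y}-,-)$ with components $\psi^{\mathtt{y}}_{(\mathsf{n},A)}=\chi_{(\TTTTT^{n-1}U\mathtt{y},A)}$ and spells out the coherence $2$-cells $(\psi^{\mathtt{y}}_{d^i})_f$, $(\psi^{\mathtt{y}}_{\partial^i})_f$, $(\psi^{\mathtt{y}}_{s^0})_f$ --- which is exactly the ``transport through $\chi$'' you describe, just recorded in the other direction.
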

\begin{proof}
By Lemma \ref{lemmain}, Proposition \ref{laxeffectivedescent} and Remark \ref{indeedpseudofunctor}, it is enough to observe that, for each lax $\TTTTT$-algebra $\mathtt{y}$, there is a pseudonatural equivalence
$$\psi ^{\mathtt{y} }: \mathbb{T}^{\mathtt{y}}(-, J-)\longrightarrow \aaa(\AAAA _ \mathtt{y}-,-) $$ 
defined by
\begin{eqnarray*}
\psi ^{\mathtt{y} } _ {{}_{( \mathsf{1} ,A)}} &:= & \chi _ {{}_{(U\mathtt{y}, A)}}: \bbb (U\mathtt{y} , RA)\to \aaa (EU\mathtt{y}, A)\\
\psi ^{\mathtt{y} } _ {{}_{( \mathsf{2} ,A)}} &:= & \chi _ {{}_{(\TTTTT U\mathtt{y}, A)}}:  \bbb (\TTTTT U\mathtt{y} , RA)\to \aaa (E\TTTTT U\mathtt{y}, A)\\
\psi ^{\mathtt{y} } _ {{}_{( \mathsf{3} ,A)}} &:= & \chi _ {{}_{(\TTTTT ^2 U\mathtt{y},A)}}:  \bbb (\TTTTT ^2 U\mathtt{y} , RA)\to \aaa (E\TTTTT ^2 U\mathtt{y}, A)
\end{eqnarray*}
in which $\chi : \bbb (-,R-)\simeq \aaa (E -, -) $ is the pseudonatural equivalence corresponding to the biadjunction $(E\dashv R, \rho , \varepsilon , v,w)$ (see Remark \ref{addd}). Also,
\begin{equation*}
\begin{aligned}
(\psi ^{\mathtt{y} } _ {{}_{s^0}})_ {{}_{f}}&:=& \id _ {{}_{\varepsilon _ {{}_{A}} }}\ast\eeee _ {{}_{(f)(\eta _ {{}_{U\mathtt{y} }} )}}  \\
(\psi ^{\mathtt{y} } _ {{}_{d^1}})_ {{}_{f}}&:=& \id _ {{}_{\varepsilon _ {{}_{A}} }}\ast\eeee _ {{}_{(f)(\alg _ {{}_{\mathtt{y} }} )}}
\end{aligned}
\qquad\qquad
\begin{aligned}
(\psi ^{\mathtt{y} } _ {{}_{\partial ^1}})_ {{}_{f}}&:=& \id _ {{}_{\varepsilon _ {{}_{A}} }}\ast\eeee _ {{}_{(f)(m _ {{}_{U\mathtt{y} }} )}}\\
(\psi ^{\mathtt{y} } _ {{}_{\partial ^2 }})_ {{}_{f}}&:=& \id _ {{}_{\varepsilon _ {{}_{A}} }}\ast\eeee _ {{}_{(f)(\TTTTT(\alg _ {{}_{\mathtt{y} }}) )}}
\end{aligned}
\end{equation*}
\begin{eqnarray*}
(\psi ^{\mathtt{y} } _ {{}_{d ^0 }})_ {{}_{f}}&:=&
\left( \id _ {{}_{\varepsilon _ {{}_{A}} }} \ast \left( E(\id _ {{}_{\alg _ {{}_{JA}}  }}\ast \TTTTT ( w_ {{}_{A}} )\ast \id _ {{}_{\TTTTT (f) }})\cdot
E(\id _ {{}_{\alg _ {{}_{JA}}  }}\ast \tttt _ {{}_{(R(\varepsilon _ {{}_{A}} ))(\rho _ {{}_{RA}})   }} \ast \id _ {{}_{\TTTTT (f) }}) \right)\right)\cdot\\
&&
\left( \id _ {{}_{\varepsilon _ {{}_{A}} }} \ast
\left(E(\left\langle \overline{ J(\varepsilon _ {{}_{A}} ) }\right\rangle ^{-1}\ast \id _ {{}_{\TTTTT (\rho _ {{}_{RA}})\TTTTT (f) }})\cdot \eeee _ {{}_{(R(\varepsilon _ {{}_{A}}))(\alg _ {{}_{JERA}}\TTTTT (\rho _ {{}_{RA}})\TTTTT (f)) }} \right)\right)\cdot\\
&&
\left( \id _ {{}_{\varepsilon _ {{}_{A}}ER(\varepsilon _ {{}_{A}} ) }} \ast
\left( E(\id_{{}_{\alg _ {{}_{JER A }}}}\ast (\TTTTT \rho ) _ {{}_{f}})\cdot E(\left\langle \overline{ JE(f) }\right\rangle ^{-1}\ast \id _ {{}_{\TTTTT(\rho _ {{}_{U\mathtt{y} }} ) }})\right)\right)\cdot\\
&&
\left( \varepsilon _ {{}_{\varepsilon _ {{}_{A}}}} \ast
\eeee _ {{}_{(RE(f)) (\alg _ {{}_{JEU\mathtt{y}}}\TTTTT(\rho _ {{}_{U\mathtt{y} }} ) )}} \right)\cdot
\left( \id _ {{}_{\varepsilon _ {{}_{A}} }}\ast \varepsilon _ {{}_{f}}\ast \id _ {{}_{E(\alg _ {{}_{JEU\mathtt{y} }}\TTTTT (\rho _ {{}_{U\mathtt{y}}} ))}}\right) 
\end{eqnarray*}
\begin{eqnarray*}
(\psi ^{\mathtt{y} } _ {{}_{\partial ^0 }})_ {{}_{f}}&:=&\left( \id _ {{}_{\varepsilon _ {{}_{A}} }} \ast \left( E(\id _ {{}_{\alg _ {{}_{JA}}  }}\ast \TTTTT ( w_ {{}_{A}} )\ast \id _ {{}_{\TTTTT (f) }})\cdot
E(\id _ {{}_{\alg _ {{}_{JA}}  }}\ast \tttt _ {{}_{(R(\varepsilon _ {{}_{A}} ))(\rho _ {{}_{RA}})   }} \ast \id _ {{}_{\TTTTT (f) }}) \right)\right)\cdot\\
&&
\left( \id _ {{}_{\varepsilon _ {{}_{A}} }} \ast
\left(E(\left\langle \overline{ J(\varepsilon _ {{}_{A}} ) }\right\rangle ^{-1}\ast \id _ {{}_{\TTTTT (\rho _ {{}_{RA}})\TTTTT (f) }})\cdot \eeee _ {{}_{(R(\varepsilon _ {{}_{A}}))(\alg _ {{}_{JERA}}\TTTTT (\rho _ {{}_{RA}})\TTTTT (f)) }} \right)\right)\cdot\\
&&
\left( \id _ {{}_{\varepsilon _ {{}_{A}}ER(\varepsilon _ {{}_{A}} ) }} \ast
\left( E(\id_{{}_{\alg _ {{}_{JER A }}}}\ast (\TTTTT \rho ) _ {{}_{f}})\cdot E(\left\langle \overline{ JE(f) }\right\rangle ^{-1}\ast \id _ {{}_{\TTTTT(\rho _ {{}_{\TTTTT U\mathtt{y} }} ) }})\right)\right)\cdot\\
&&
\left( \varepsilon _ {{}_{\varepsilon _ {{}_{A}}}} \ast
\eeee _ {{}_{(RE(f)) (\alg _ {{}_{JE\TTTTT U\mathtt{y}}}\TTTTT(\rho _ {{}_{\TTTTT U\mathtt{y} }} ) )}} \right)\cdot
\left( \id _ {{}_{\varepsilon _ {{}_{A}} }}\ast \varepsilon _ {{}_{f}}\ast \id _ {{}_{E(\alg _ {{}_{JE\TTTTT U\mathtt{y} }}\TTTTT (\rho _ {{}_{\TTTTT U\mathtt{y}}} ))}}\right) 
\end{eqnarray*}
This defines a pseudonatural transformation which is a a pseudonatural equivalence, since it is objectwise an equivalence.
\end{proof}

\begin{theo}[Strict Biadjoint Triangle]\label{GOAL2}
Let $(E\dashv R, \rho , \varepsilon )$ be a $2$-adjunction, $(\TTTTT,  m  , \eta  )$ a $2$-monad on $\bbb $ and $\ell :\mathsf{Lax}\textrm{-}\TTTTT\textrm{-}\Alg\to\mathsf{Lax}\textrm{-}\TTTTT\textrm{-}\Alg _ \ell $ the inclusion. Assume that
$$\xymatrix{  \aaa\ar[rr]^-{J}\ar[dr]_-{R}&&\mathsf{Lax}\textrm{-}\TTTTT\textrm{-}\Alg\ar[dl]^{U}&\aaa\ar[rr]^-{\widetilde{J}}\ar[dr]_-{J}&&\mathsf{Lax}\textrm{-}\TTTTT\textrm{-}\Alg _ {\textrm{s}}\ar[dl] \\
&\bbb  &&&\mathsf{Lax}\textrm{-}\TTTTT\textrm{-}\Alg  & }$$
are commutative triangles, in which $\mathsf{Lax}\textrm{-}\TTTTT\textrm{-}\Alg _{\textrm{s}}\to\mathsf{Lax}\textrm{-}\TTTTT\textrm{-}\Alg  $ is the locally full inclusion of the $2$-category of lax algebras and strict $\TTTTT $-morphisms into the $2$-category of lax algebras and $\TTTTT $-pseudomorphisms. The pseudofunctor $\ell\circ J$ is right biadjoint if and only if $\aaa $ has the strict lax codescent object of the diagram $\AAAA _ \mathtt{y}: \Delta ^\op _ \ell\to \aaa $ for every lax $\TTTTT$-algebra $\mathtt{y}$. In this case, the left $2$-adjoint $G$ is defined by $G\mathtt{y} = \dot{\Delta } _ \ell (\mathsf{0}, \t - ) \Asterisk  \AAAA _\mathtt{y} $

Furthermore, $J$ is right $2$-adjoint if and only if $\aaa $ has the strict codescent object of the diagram $\AAAA _ \mathtt{y}: \Delta ^\op _ \ell\to \aaa $ for every lax $\TTTTT$-algebra $\mathtt{y}$.
\end{theo}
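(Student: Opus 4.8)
The plan is to run the proof of Theorem \ref{GOAL} verbatim, but with Lemma \ref{lemmainstrict} in place of Lemma \ref{lemmain} and with every pseudonatural equivalence upgraded to a $2$-natural isomorphism. So the whole argument reduces to verifying the hypotheses of Lemma \ref{lemmainstrict}, first for the weight $\WWWW = \dot{\Delta}_\ell(\mathsf{0}, \t -)$ (handling $\ell\circ J$) and then for $\WWWW = \dot{\Delta}(\mathsf{0}, \j -)$ (handling $J$): namely that $\mathfrak{D}_\mathtt{y}$ and $\AAAA_\mathtt{y}$ are honest $2$-functors, that $\mathfrak{D}_\mathtt{y}\cong \aaa(\AAAA_\mathtt{y}-,-)$ is a $2$-natural isomorphism, and that the \emph{strict} weighted limit $\{\WWWW, \mathfrak{D}_\mathtt{y}(-,A)\}$ computes the appropriate hom-category. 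I expect all of this to come from the strictness packed into the hypotheses: $\TTTTT$ is a $2$-monad (so $\mu,\iota,\tau$ are identities, $m,\eta$ are $2$-natural, and $\TTTTT$ is a $2$-functor), $(E\dashv R,\rho,\varepsilon)$ is a genuine $2$-adjunction (so $v,w$ are identities, $E$ is a $2$-functor with $\eeee$ the identity, and $\chi$ of Remark \ref{addd} is a $2$-natural \emph{isomorphism}), and the second triangle forces $J$ to factor through $\mathsf{Lax}\textrm{-}\TTTTT\textrm{-}\Alg_{\textrm{s}}$ via $\widetilde{J}$, so that $J$ sends every morphism to a \emph{strict} $\TTTTT$-morphism.

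First I would set $\mathfrak{D}_\mathtt{y} := \mathbb{T}^\mathtt{y}(-, J-)$ with $\mathbb{T}^\mathtt{y}$ as in Remark \ref{indeedpseudofunctor}. Since $\TTTTT$ is a $2$-monad, the structural $2$-cells of the diagram $\mathbb{T}^\mathtt{y}_\mathtt{z}$ of Proposition \ref{laxeffectivedescent} are $2$-natural, so $\mathbb{T}^\mathtt{y}$ is an actual $2$-functor; and because $J$ factors through strict $\TTTTT$-morphisms, the pseudofunctoriality constraints on morphisms vanish and $\mathfrak{D}_\mathtt{y}:\Delta_\ell\times\aaa\to\Cat$ is again a $2$-functor. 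Proposition \ref{laxeffectivedescent} then directly supplies the two isomorphisms $\{\dot{\Delta}_\ell(\mathsf{0}, \t -), \mathfrak{D}_\mathtt{y}(-,A)\}\cong \mathsf{Lax}\textrm{-}\TTTTT\textrm{-}\Alg_\ell(\mathtt{y}, JA)$ and $\{\dot{\Delta}(\mathsf{0}, \j -), \mathfrak{D}_\mathtt{y}(-,A)\}\cong \mathsf{Lax}\textrm{-}\TTTTT\textrm{-}\Alg(\mathtt{y}, JA)$, which are precisely the hom-category identifications required for $\ell\circ J$ and for $J$ respectively.

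Next I would take $\AAAA_\mathtt{y}$ to be the diagram of Definition \ref{trianglediagram} and produce the $2$-natural isomorphism $\mathfrak{D}_\mathtt{y}\cong \aaa(\AAAA_\mathtt{y}-,-)$ by specializing the pseudonatural equivalence $\psi^\mathtt{y}$ from the proof of Theorem \ref{GOAL}. Each of the components $\psi^{\mathtt{y}}_{(\mathsf{1},A)},\psi^{\mathtt{y}}_{(\mathsf{2},A)},\psi^{\mathtt{y}}_{(\mathsf{3},A)}$ is an instance of $\chi$, hence invertible. Since $E$ is a $2$-functor, the cells $\eeee_{(f)(\alg_\mathtt{y})}$, $\eeee_{(f)(m_{U\mathtt{y}})}$, $\eeee_{(f)(\TTTTT(\alg_\mathtt{y}))}$, $\eeee_{(f)(\eta_{U\mathtt{y}})}$ are identities, so $(\psi^{\mathtt{y}}_{d^1})_f$, $(\psi^{\mathtt{y}}_{\partial^1})_f$, $(\psi^{\mathtt{y}}_{\partial^2})_f$, $(\psi^{\mathtt{y}}_{s^0})_f$ collapse to identities; and since $v,w,\mu,\iota,\tau$ are identities and every cell of the form $\langle \overline{JE(f)}\rangle$ or $\langle \overline{J(\varepsilon_A)}\rangle$ is an identity (because $J$ lands in strict morphisms), the long composites defining $(\psi^{\mathtt{y}}_{d^0})_f$ and $(\psi^{\mathtt{y}}_{\partial^0})_f$ likewise collapse. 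Thus $\psi^\mathtt{y}$ becomes a $2$-natural isomorphism.

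With the hypotheses of Lemma \ref{lemmainstrict} in hand the conclusion is immediate: taking $\WWWW = \dot{\Delta}_\ell(\mathsf{0}, \t -)$ shows that $\ell\circ J$ is right $2$-adjoint exactly when the strict lax codescent object $\dot{\Delta}_\ell(\mathsf{0}, \t -)\Asterisk \AAAA_\mathtt{y}$ exists for every lax $\TTTTT$-algebra $\mathtt{y}$, with $G\mathtt{y}$ given by this colimit, while taking $\WWWW = \dot{\Delta}(\mathsf{0}, \j -)$ shows that $J$ is right $2$-adjoint exactly when $\aaa$ has the strict codescent object of every $\AAAA_\mathtt{y}$. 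The main obstacle is precisely the strictness bookkeeping of the previous two paragraphs: confirming that the data which were merely invertible in Theorem \ref{GOAL} genuinely become identities here. This is where all three strictness hypotheses are used, and in particular where the second triangle (the factorization of $J$ through $\mathsf{Lax}\textrm{-}\TTTTT\textrm{-}\Alg_{\textrm{s}}$) earns its keep, as it is what makes both $\mathfrak{D}_\mathtt{y}$ strictly $2$-functorial and $\psi^\mathtt{y}$ strictly $2$-natural, thereby promoting the biadjunction of Theorem \ref{GOAL} to a $2$-adjunction.
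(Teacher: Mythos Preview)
Your proposal is correct and follows essentially the same approach as the paper's own proof: define $\psi^{\mathtt{y}}$ exactly as in the proof of Theorem \ref{GOAL}, observe that under the strict hypotheses it becomes a $2$-natural isomorphism, and then invoke Lemma \ref{lemmainstrict} together with Proposition \ref{laxeffectivedescent} and Remark \ref{indeedpseudofunctor}. Your write-up simply spells out in more detail which strictness assumptions kill which coherence cells (the role of the factorization $\widetilde{J}$ in making the $\langle\overline{J(-)}\rangle$ cells identities, the $2$-adjunction in trivializing $\eeee$, $v$, $w$, $\varepsilon_{(-)}$, and the $2$-monad in trivializing $\tttt$, $m_{(-)}$, $\eta_{(-)}$), whereas the paper compresses this into the single sentence ``in our setting, we get a $2$-natural transformation which is an objectwise isomorphism''.
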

\begin{proof}
We have, in particular, the setting of Theorem \ref{GOAL}. Therefore, we can define $\psi $ as it is done in the last proof. However, in our setting, we get a $2$-natural transformation which is an objectwise isomorphism. Therefore $\psi $ is a $2$-natural isomorphism.

By Lemma \ref{lemmainstrict}, Proposition \ref{laxeffectivedescent} and Remark \ref{indeedpseudofunctor}, this completes our proof.
\end{proof}

\section{Coherence}\label{straightforward}

As mentioned in the introduction, \textit{the $2$-monadic approach to coherence} consists of studying the inclusions  induced by a $2$-monad $\TTTTT $ of Remark \ref{coherenceinclusions} to get general coherence results~\cite{Power, SLACK2020, Power89}. 

Given a $2$-monad $(\TTTTT,  m  , \eta  )$ on a $2$-category $\bbb $, the inclusions of Remark \ref{coherenceinclusions} and the forgetful functors of Remark \ref{forgetful} give in particular the commutative diagram below, in which $\mathsf{Ps}\textrm{-}\TTTTT\textrm{-}\Alg\to\bbb $ is right biadjoint and $\TTTTT\textrm{-}\Alg _ {\textrm{s}}\to\bbb $ is right $2$-adjoint. 

$$\xymatrix{
\TTTTT\textrm{-}\Alg _ {\textrm{s}}\ar[r]\ar[rd]
&
\mathsf{Ps}\textrm{-}\TTTTT\textrm{-}\Alg\ar[r]\ar[d]
&
\mathsf{Lax}\textrm{-}\TTTTT\textrm{-}\Alg _\ell\ar[dl]
\\
&\bbb &} $$

In this section, we are mainly concerned with the triangles involving the $2$-category of lax algebras. We refer to \cite{Lucatelli1} for the remaining triangles involving the $2$-category of pseudoalgebras.
The inclusion $\TTTTT\textrm{-}\Alg _ {\textrm{s}}\to \mathsf{Lax}\textrm{-}\TTTTT\textrm{-}\Alg _ \ell$ is also studied in \cite{SLACK2020}. Therein, it is proved that it has a left $2$-adjoint whenever the $2$-category 
$\TTTTT\textrm{-}\Alg _ {\textrm{s}}$ has the lax codescent objects of some diagrams called therein \textit{lax coherence data}. This is of course the immediate consequence of Theorem \ref{GOAL2} applied to the large triangle above.

Actually, we can study other inclusions of Remark \ref{coherenceinclusions} with the techniques of this paper. For instance, by Theorem \ref{GOAL2} and Corollary \ref{rt}, the inclusion of $\TTTTT \textrm{-}\Alg $ into any $2$-category of  $\TTTTT $-algebras and lax $\TTTTT $-morphisms of Remark \ref{coherenceinclusions} has a left biadjoint provided that  $\TTTTT \textrm{-}\Alg $ has lax codescent objects. Also, the inclusion of this $2$-category into any $2$-category of $\TTTTT $-algebras and $\TTTTT $-pseudomorphisms (\textit{i.e.} vertical arrows with domain in $\TTTTT \textrm{-}\Alg $ of Remark \ref{coherenceinclusions}) has a left biadjoint provided that $\TTTTT \textrm{-}\Alg $ has codescent objects.

In the more general context of pseudomonads, we can apply Theorem \ref{GOAL} and Theorem \ref{GOAL2} to understand precisely when the inclusions $\mathsf{Ps}\textrm{-}\TTTTT\textrm{-}\Alg\to  \mathsf{Lax}\textrm{-}\TTTTT\textrm{-}\Alg _\ell$ and $\mathsf{Ps}\textrm{-}\TTTTT\textrm{-}\Alg\to  \mathsf{Lax}\textrm{-}\TTTTT\textrm{-}\Alg$ have left biadjoints. In particular, we have:

\begin{theo}
Let $\TTTTT = (\TTTTT,  m  , \eta , \mu,   \iota, \tau )$ be a pseudomonad on a $2$-category $\bbb $. If $\mathsf{Ps}\textrm{-}\TTTTT\textrm{-}\Alg$ has lax codescent objects, then the inclusion $\mathsf{Ps}\textrm{-}\TTTTT\textrm{-}\Alg\to  \mathsf{Lax}\textrm{-}\TTTTT\textrm{-}\Alg$ has a left biadjoint.
Furthermore, if $\mathsf{Ps}\textrm{-}\TTTTT\textrm{-}\Alg$ has codescent objects, $\mathsf{Ps}\textrm{-}\TTTTT\textrm{-}\Alg\to\mathsf{Lax}\textrm{-}\TTTTT\textrm{-}\Alg $ has a left biadjoint.
\end{theo}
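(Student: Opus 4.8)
The plan is to recognise this as a special case of the biadjoint triangle of Theorem~\ref{GOAL}. First I would set $\aaa := \mathsf{Ps}\textrm{-}\TTTTT\textrm{-}\Alg$ and take $R := U\colon \mathsf{Ps}\textrm{-}\TTTTT\textrm{-}\Alg\to\bbb$ to be the forgetful $2$-functor of Remark~\ref{forgetful}. As recalled at the start of Section~\ref{Laxalgebras}, for every pseudomonad $\TTTTT$ this $U$ is right biadjoint; I fix a biadjunction $(E\dashv R,\rho,\varepsilon,v,w)$ realising it, with $E$ the free-pseudoalgebra pseudofunctor. For $J$ I take the inclusion $\mathsf{Ps}\textrm{-}\TTTTT\textrm{-}\Alg\to\mathsf{Lax}\textrm{-}\TTTTT\textrm{-}\Alg$: a pseudoalgebra is exactly a lax $\TTTTT$-algebra whose structure $2$-cells $\overline{{\underline{\mathtt{z}}}},\overline{{\underline{\mathtt{z}}}}_0$ are invertible, and a pseudomorphism between pseudoalgebras is already a pseudomorphism of the underlying lax algebras, so $J$ is a full $2$-functor.

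The only thing to check before applying Theorem~\ref{GOAL} is that the relevant triangle commutes. This is immediate: writing $U_{\mathsf{Lax}}\colon\mathsf{Lax}\textrm{-}\TTTTT\textrm{-}\Alg\to\bbb$ for the forgetful $2$-functor, both $U_{\mathsf{Lax}}\circ J$ and $R$ send an algebra to its underlying object of $\bbb$ and a morphism to its underlying morphism, so $U_{\mathsf{Lax}}\circ J = R$. Hence all the hypotheses of Theorem~\ref{GOAL} hold with this $\aaa$, this biadjunction, and this $J$.

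Theorem~\ref{GOAL} then produces, for each pseudoalgebra $\mathtt{y}$, the diagram $\AAAA_\mathtt{y}\colon\Delta_\ell^{\op}\to\mathsf{Ps}\textrm{-}\TTTTT\textrm{-}\Alg$ of Definition~\ref{trianglediagram}, and tells us that $\ell\circ J\colon\mathsf{Ps}\textrm{-}\TTTTT\textrm{-}\Alg\to\mathsf{Lax}\textrm{-}\TTTTT\textrm{-}\Alg_\ell$ is right biadjoint precisely when $\mathsf{Ps}\textrm{-}\TTTTT\textrm{-}\Alg$ has the lax codescent object of every $\AAAA_\mathtt{y}$, and that $J\colon\mathsf{Ps}\textrm{-}\TTTTT\textrm{-}\Alg\to\mathsf{Lax}\textrm{-}\TTTTT\textrm{-}\Alg$ is right biadjoint precisely when $\mathsf{Ps}\textrm{-}\TTTTT\textrm{-}\Alg$ has the codescent object of every $\AAAA_\mathtt{y}$. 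Since the blanket hypothesis that $\mathsf{Ps}\textrm{-}\TTTTT\textrm{-}\Alg$ has lax codescent objects (respectively codescent objects) supplies in particular the lax codescent object (respectively codescent object) of each $\AAAA_\mathtt{y}$, both inclusions acquire the asserted left biadjoints, given on objects by $\mathtt{y}\mapsto\dot{\Delta}_\ell(\mathsf{0},\t-)\Asterisk_{\bi}\AAAA_\mathtt{y}$ and $\mathtt{y}\mapsto\dot{\Delta}(\mathsf{0},\j-)\Asterisk_{\bi}\AAAA_\mathtt{y}$ respectively. I expect no genuine obstacle here beyond recording the standard pseudomonad-theoretic fact that $R=U$ is right biadjoint, which is what makes the biadjunction data $(E\dashv R,\rho,\varepsilon,v,w)$ --- and in particular the invertible modifications $v,w$ feeding into Definition~\ref{trianglediagram} --- available; granted this, the result is a direct corollary of Theorem~\ref{GOAL}.
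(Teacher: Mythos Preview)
Your approach is exactly the paper's: the theorem is stated as an immediate corollary of Theorem~\ref{GOAL}, applied to the triangle with $\aaa=\mathsf{Ps}\textrm{-}\TTTTT\textrm{-}\Alg$, $R$ the (right biadjoint) forgetful $2$-functor, and $J$ the full inclusion into $\mathsf{Lax}\textrm{-}\TTTTT\textrm{-}\Alg$. One small slip: the diagrams $\AAAA_\mathtt{y}$ are produced for each \emph{lax} $\TTTTT$-algebra $\mathtt{y}$ (the objects of the codomain of $J$, resp.\ $\ell\circ J$), not for each pseudoalgebra; this does not affect the argument since your blanket hypothesis supplies the needed (lax) codescent objects regardless.
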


In particular, if $\TTTTT = (\TTTTT,  m  , \eta , \mu,   \iota, \tau )$ is a pseudomonad that preserves lax codescent objects, then $\mathsf{Ps}\textrm{-}\TTTTT\textrm{-}\Alg $ has lax codescent objects and, therefore, satisfies the hypothesis of the first part of the result above. Similarly, if $\TTTTT $ preserves codescent objects, it satisfies the hypothesis of the second part.

 \end{document}